\documentclass[journal]{IEEEtran}
\IEEEoverridecommandlockouts
\hyphenation{op-tical net-works semi-conduc-tor}
\pdfminorversion=4
\usepackage{amsmath, amsfonts, amsthm,amssymb, mathrsfs, setspace, graphicx, epsfig, caption, subcaption, url, float, framed, circuitikz, color,overpic,epstopdf,hyperref}

\newtheorem{theorem}{Theorem}
\newtheorem{example}{Example}
\newtheorem{definition}{Definition}
\newtheorem{lemma}{Lemma}

\makeatletter
\usepackage{cite}
\makeatother

\newcommand{\MM}{\mathcal{M}}  
\newcommand{\DD}{\mathrm{D}}   
\newcommand{\MC}{\overline{\mathcal{M}(\mathrm{D})}}  
\newcommand{\RR}{\mathbb{R}}  
\newcommand{\MB}{\partial \mathcal{M}(\mathrm{D})} 
\newcommand{\MMR}{\mathcal{\tilde{M}}}  
\newcommand{\DDR}{\mathrm{\tilde{D}}}  

\title{Model Boundary Approximation Method as a Unifying Framework for Balanced Truncation and Singular Perturbation Approximation}

\author{Philip E. Par\'{e}, David Grimsman, Alma T. Wilson,
Mark K. Transtrum and
Sean Warnick\thanks{
Philip E. Par\'{e} is at KTH Royal Institute of Technology and can be reached at {\tt philip.e.pare@gmail.com}.  David Grimsman is at the University of California at Santa Barbara and can be reached at {\tt davidgrimsman@gmail.com}. Alma T. Wilson, Mark K. Transtrum and Sean C. Warnick are at Brigham Young University, Provo, Utah, and can be reached at {\tt alba\_wilzod@yahoo.com}, {\tt mktranstrum@byu.edu}, and {\tt sean.warnick@gmail.com}, respectively.  This material is based on research sponsored by the Department of Homeland
Security (DHS) Science and Technology Directorate, Homeland Security Advanced Research Projects Agency (HSARPA), Cyber Security Division (DHS S\&T/HSARPA/CSD), LRBAA 12-07 via contract number HSHQDC-13-C-B0052.  All material in this paper represents the position of the authors and not necessarily that of DHS.}}
\begin{document}
\maketitle

\begin{abstract}
We show that two widely accepted model reduction techniques, Balanced Truncation and Balanced Singular Perturbation Approximation, can be derived as limiting approximations of a carefully constructed parameterization of Linear Time Invariant (LTI) systems by employing the Model Boundary Approximation Method (MBAM), a recent development in the Physics literature \cite{Transtrum2014}. 
This unifying framework of these popular model reduction techniques shows that Balanced Truncation and Balanced Singular Perturbation Approximation each correspond to a particular boundary point on a manifold, the ``model manifold,'' which is associated with the specific choice of model parameterization and initial condition, and is embedded in a sample space of measured outputs, which can be chosen arbitrarily, provided that the number of samples exceeds the number of parameters. 
We also show that MBAM provides a novel way to interpolate between Balanced Truncation and Balanced Singular Perturbation Approximation, by exploring the set of approximations on the boundary of the manifold between the elements that correspond to the two model reduction techniques; this allows for alternative approximations of a given system to be found that may be better under certain conditions. 
The work herein suggests similar types of approximations may be obtainable in topologically similar places (i.e. on certain boundaries) on the model manifold of nonlinear systems if analogous parameterizations can be achieved, therefore extending these widely accepted model reduction techniques to nonlinear systems.  
\end{abstract}

\section{Introduction}
\label{sec:intro}

Modern  systems theorists  are studying and engineering  systems that are larger and more complex than ever before \cite{strogatz2001exploring}. Typical examples of these complex systems include economic networks \cite{jackson1996strategic}, biological systems \cite{hartwell1999molecular,bhalla1999emergent,jeong2000large
}, and the Internet \cite{broder2000graph}.  Due to the unprecedented size of these  systems, simplified models are necessary to reason about them effectively \cite{antoulas2005approximation,benner2015survey}.  Specifically,  we detail four important motivations for building simplified approximations of large scale complicated systems, and then  provide an  overview  of how this  work relates  singular perturbation and balanced truncation methods.

\subsection{Motivation}

When attempting to learn a system from limited data, large models cannot be identified, making simplified models necessary.  First principles models typically have many parameters that must be tuned correctly for the model to reflect the behavior of a real system.  Examples are everywhere, from agronomy and  biochemical reaction networks to swarms of autonomous flying robots or power systems.  Using data to learn the correct values of parameters is the purview of system identification, and a rich theory has developed quantifying when data is informative enough to accurately estimate parameter values \cite{moore1983persistence,bitmead1984persistence,
shimkin1987persistency,katayama2006subspace}.  Typically, however, there is much less data than needed to learn all the parameters in a first-principles model, so simplifying the model to yield one with fewer parameters can help identify the system from data.  

Second, the need for simplified models arises when designing controllers for complex systems.  The complexity of an optimal controller often mirrors that of the system being controlled; therefore a complex system may often suggest the need for complicated controllers.  Nevertheless, when engineering such complicated systems is unreasonable, designing controllers for simplified approximations can lead to acceptable trade-offs between complexity and performance degradation.

Third, simplified models can be an important link between macro-scale and micro-scale models. Generative models, for example, often detail micro-scale phenomena, such as consumer-choice models or models of a single neuron or molecular organization, etc., and then hypothesize that macro-scale behavior, such as consumer demand or regions of the brain or material properties, etc., is the aggregation of a large number of micro-scale instances\cite{usanmaz2016first,franci2014modeling,epstein2006generative}. The complexity of modeling a macro-scale model composed of thousands or millions of micro-scale instances, however, can be unwieldy, and such models almost never exhibit behavioral complexity commensurate with the descriptive complexity of the model as an aggregate of  many micro-scale models. Thus, a systematic technique for  developing simplified descriptions of macro-scale models that resemble, in a principled way, the aggregate of micro-scale phenomena can be critical in such applications.

Fourth, understanding the resilience and vulnerability of large-scale critical infrastructures demands techniques for modeling the attack surface of complex cyber-physical-human systems.  The attack surface is typically a simplified model of the system that highlights the exposed variables and dynamics strongly affected by or observed from 
them \cite{chetty2014,cox2014,grimsman2016,teixeira2015secure,zhu2015game,amin2015game}.  Thus, the techniques discussed here offer the possibility of  modeling the attack surface of  large-scale cyber-physical-human systems and contribute to a science of system security.

\subsection{Overview}

Two  important model reduction techniques for linear systems include Balanced Truncation and singular perturbation methods.   Each of these approaches focuses on a particular aspect of the system to preserve.

Balanced Truncation (BT) was first proposed in \cite{moore1981principal} and has been explored for continuous and discrete time \cite{pernebo1982model}. A clear presentation is provided in \cite{dullerud2000course}. The basic idea is that a change of basis is used to order the states equally observable and controllable, and to order them from most to least controllable and observable, and then the least controllable and observable states are truncated; see \cite{benner2015survey} for a survey of projection-based reduction techniques, including balanced truncation, for parameterized dynamical systems. A complete  explanation of BT is presented in Section \ref{sec:bt}.

Perturbation theory is a well studied area and has a rich background in linear operator theory and the controls field \cite{kokotovic1976singular,kato1995perturbation,kokotovic1999singular
}, and it has seen renewed interest in recent years \cite{pare2015unified,jardon2016analysis,jardon2017stabilization}. It is commonly applied in the context of well-separated time scales.  In this case the ratio of time-scales identifies an explicit ``small'' parameter in which a series expansion can be computed.  The theory has also been applied to balanced realizations
, which we will refer to as Balanced Singular Perturbation Approximation (BSPA) \cite{fernando1982singular,fernando1982singular2,
fernando1983singular,liu1989singular}. 
In \cite{fernando1982singular2}, an alternative parameterization of the transfer function is used to provide a different unification of BT and BSPA using the ``generalized singular perturbational approximation.'' This parameterization is quite different than the one proposed here and does not lead to the insight that BT and BSPA occur on the boundary of a particular manifold representing the original model class. A complete  explanation of BSPA is presented in Section \ref{sec:spa}.  
The Manifold Boundary Approximation Method, 
introduced in \cite{Transtrum2014}, is an algorithm that, given a parameterized model class, systematically generates a ``representative" subset of the class parameterized by fewer parameters than the original set of models. The process is enabled by the fact that a parameterized model can be interpreted as a mapping between a parameter space and prediction space.  The set of all possible models generated by varying the parameters over all values typically corresponds to a manifold with the model mapping acting as a coordinate chart.  To make this abstract idea more concrete, consider a model class with $N$ parameters that is sampled at $M$ different points (times or frequencies). In this case, the model defines a mapping from $\RR^N$ to $\RR^M$ whose image is an $N$ dimensional submanifold of $\RR^M$.  We illustrate this conceptually for several simple models in this manuscript.  Each point on the manifold corresponds to the sample of a different set of parameters. For practical problems, explicitly constructing the entire manifold would be impractical, but such a manifold can be explored using computational differential geometry.  Systematic study of model manifolds from a variety of classes has revealed that they are typically bounded and that the boundary consists of a hierarchical cell complex, i.e., a hierarchy of faces, edges, corners, etc., similar to a high-dimensional polygon.  Each boundary cell is a manifold that corresponds to a model class with fewer parameters.  The ``best" approximation of this model can be found by locating the closest boundary point with the desired level of complexity, where closest depends on your metric of choice. MBAM will be formalized in Section~\ref{sec:mbam}.

In this paper we explore the problem of system approximation and identify an intrinsic structure to the problem by proposing an appropriate parameterization for LTI systems that enables us to show that BT and BSPA can be derived by applying MBAM to it. Given this insight, and since MBAM does not depend on linearity, only on a given parameterization, we conjecture that MBAM may provide a framework for the systematic model reduction of more general (nonlinear) systems.
In \cite{andersson1999model}, Andersson \textit{et al.} provide bounds for model reduction on systems that include uncertainties or nonlinearities, modeled as a delta block, assuming the delta block can be modeled using integral quadratic constraints (IQC). The authors also show that BT and BSPA appear as natural limits of the proposed IQC model, but do not provide any insight as to how to interpolate between the two. While the model in \cite{andersson1999model} is more general than the model considered herein, we believe that the ideas proposed in this work can be applied to a broader classes of models, independent of the IQC assumption. 

In Section \ref{sec:back} we review Balanced Truncation and Balanced Singular Perturbation Approximation. We present MBAM in Section \ref{sec:mbam}. In Section \ref{sec:param}, we present a parameterization of LTI systems, and then in Section \ref{sec:main} we use it to derive BT and BSPA using MBAM. In Section \ref{sec:example} we present a simple example that illustrates the result and how MBAM can give insight into model reduction of linear systems, allowing a way to interpolate between BT and BSPA. 
A condensed version of the work is given in \cite{pare2015unified}. 
Further contributions of this paper include: 1) the inclusion of the complete proofs; 
 2) the addition of several illustrative examples;
 3) a clearer, more in-depth presentation of MBAM; and
4) an extensive discussion of the parameterization of LTI systems. 

\section{Model Reduction of LTI Systems}\label{sec:back}

In this work we consider LTI systems described by,
\begin{equation}\label{eq:ss}
\begin{split} 
\dot{x}(t) & = Ax(t) + Bu(t) \\
y(t) & = Cx(t) + Du(t)
\end{split}
\end{equation}
where $x(t)\in \mathbb{R}^n$, $y(t)\in \mathbb{R}^p$, and $u(t)\in \mathbb{R}^m$. 
We will assume the system described by \eqref{eq:ss} is minimal and stable, and note that it generates  a unique transfer function given by:
\begin{equation}\label{eq:tf}
G(s) = C(sI-A)^{-1}B+D 
\end{equation}
which is a $p\times m$ matrix of rational functions in the complex variable $s\in\mathbb{C}$.  Note that while (\ref{eq:ss}) uniquely specifies (\ref{eq:tf}), there are many equivalent realizations  of the form (\ref{eq:ss}) for any  given input-output map (\ref{eq:tf}).


Model reduction is an important, well--studied problem. Although there are many approaches to model reduction, this work focuses on two  types of model reduction for LTI systems: BT and BSPA. We will review these two methods and their accompanying literature.

\subsection{Balanced Truncation}\label{sec:bt}

Consider a  stable system given by a bounded map (\ref{eq:tf}), and let  some (arbitrary) minimal realization of it, (\ref{eq:ss}), be known.   In general, the description (\ref{eq:ss}) may not reveal  any particular insight about the system, compared with any other realization of the system, and all  such minimal realizations  can be found by changing coordinates of the state variable, i.e. by rewriting  (\ref{eq:ss}) in terms of a new basis of the state space obtained by multiplying $x$ by any invertible matrix,  $\bar{x}=Tx$, and considering the system in terms of $\bar{x}$; we call $T$ a state transformation. 

Nevertheless, while an arbitrary  realization may be no more useful than any other, some  particular realizations are  more useful for particular types of analysis than others.   For example, modal realizations change basis of the state space to reveal the eigenstructure of each dynamic mode of the system.  Any such ``particular" realization is called {\em canonical} when there is a well-defined process for constructing it, i.e. when such a realization always exists, and when it is unique for every system $G$.  Thus, a canonical realization is in one-to-one correspondence with  any given  input-output map (\ref{eq:tf}).

It turns out that there is a canonical realization of LTI systems that has a property  we call  {\em balanced}.  That is to say, although there may be multiple realizations  with the  property of being balanced, we can  specify a fixed procedure that will  repeatably construct such a realization  for any system $G$.  We will represent this realization by:
\begin{equation}\label{eq:ssbal}
\begin{split}
\dot{\bar{x}}(t) & =  \bar{A}\bar{x}(t) + \bar{B}u(t) \\
y(t) & =  \bar{C}\bar{x}(t) + \bar{D}u(t)
\end{split}
\end{equation}
where $\bar{A}$ satisfies the Lyapunov equations: 
\begin{equation}\label{eq:lap}
\begin{split}
\bar{A}^T X + X \bar{A} &= - \bar{C}^T \bar{C} \ \ \text{  and} \\ 
\bar{A}X + X\bar{A}^T &= - \bar{B}\bar{B}^T
\end{split}
\end{equation}
with $X=diag(\theta_1,\dots,\theta_n)$ and the $\theta_i$'s are the Hankel singular values (HSVs) of the system \cite{dullerud2000course}.  These Lyapunov equations are solved by the controllability and observability Gramians; since they are solved by the same diagonal matrix, $X$, we call the realization 
{\em balanced}, that is, each state  in this basis is as observable as it is controllable, quantified by the corresponding $\theta_i$. 
The HSVs are ordered from largest to the smallest in magnitude, and consequently the states are ordered 
from the most controllable/observable to the least. 

Once balanced, one may partition the states into two sets, a highly controllable/observable set and a low controllable/observable set.  Partitioning the state matrices commensurate with this partition of the states then yields:
\begin{equation}
\label{eq:partition}
\begin{split}
\bar{A} &= \begin{bmatrix}
\bar{A}_{11} \ \ \  \bar{A}_{12} \\
\bar{A}_{21} \ \ \  \bar{A}_{22} 
\end{bmatrix}, \\
\bar{B} &= \begin{bmatrix}
\bar{B}_1 \\
\bar{B}_2
\end{bmatrix}, \\
\bar{C} &= \begin{bmatrix}
\bar{C}_1 \ \ \ \bar{C}_2
\end{bmatrix}, \text{ and} \\
\bar{x} &= \begin{bmatrix}
\bar{x}_1 \\ \bar{x}_2
\end{bmatrix}
\end{split}
\end{equation}
where $\bar{A}_{11}\in \mathbb{R}^{(n-k) \times (n-k)}$, $\bar{A}_{22}\in \mathbb{R}^{k \times k}$, and the rest of the blocks are the appropriate dimensions.
BT of $k$ states projects the system to become: 
\begin{equation*}
\begin{split}
\dot{\bar{x}}_1(t) & =  \bar{A}_{11} \bar{x}_1(t) + \bar{B}_1u(t) \\
y(t) & =  \bar{C}_1 \bar{x}_1(t) + {D}u(t).
\end{split}
\end{equation*}
Note that stability is preserved under truncation \cite{dullerud2000course}. 
The transfer function for the truncated system becomes
\begin{equation*}
\bar{G}_r(s) = \bar{C}_1(sI-\bar{A}_{11})^{-1}\bar{B}_1+{D}.
\end{equation*}
Using the H-infinity norm, if $\theta_{n-k+1}<\theta_{n-k}$ and $\theta_{n-k+1}=\cdots=\theta_n$,
\begin{equation}\label{eq:err1}
\| G - \bar{G}_r \|_{\infty} \leq 2\theta_{n-k+1}.
\end{equation} 
If $\theta_{n-k+1}<\theta_{n-k}$ and $\theta_{n-k+1}>\cdots > \theta_n$,
\begin{equation}\label{eq:err2}
\| G - \bar{G}_r \|_{\infty} \leq 2\sum_{i=n-k+1}^n \theta_i.
\end{equation} 
Proofs of these error bound can be found in \cite{glover1984all,enns1984model,enns1984modelphd,
dullerud2000course}. BT has been extended to several different classes of systems including time-varying, multidimensional, and uncertain systems \cite{beck1996model,lall1999model,beck2006coprime}, and  many structure preserving model reduction techniques, with varying definitions, have been considered as extensions of BT \cite{hsu1983decomposition,zhou1995structurally,
vandendorpe2004model,li2005structured,sandberg2009model}. Extensions have also been developed for nonlinear systems \cite{scherpen1993balancing,scherpen1996balancing,lall2002subspace}.

\subsection{Balanced Singular Perturbation Approximation} \label{sec:spa}

Given a partitioned balanced realization characterized as in \eqref{eq:partition}, the reduced system given by BSPA is 
\begin{equation*}
\begin{split}
\dot{\hat{x}}_1(t) & =  \hat{A}_{11} \bar{x}_1(t) + \hat{B}_1u(t) \\
y(t) & =  \hat{C}_1 \hat{x}_1(t) + \hat{D}u(t)
\end{split}
\end{equation*}
where
\begin{equation}
\label{eq:spa}
\begin{split}
\hat{A} &=  \bar{A}_{11} - \bar{A}_{12}\bar{A}_{22}^{-1} \bar{A}_{21}, \\
\hat{B}  &=  \bar{B}_1 - \bar{A}_{12}\bar{A}_{22}^{-1} \bar{B}_2, \\
\hat{C} &= \bar{C}_1 - \bar{C}_2 \bar{A}_{22}^{-1} \bar{A}_{21}, \text{ and}\\
\hat{D} &= \bar{D} - \bar{C}_2 \bar{A}_{22}^{-1} \bar{B}_2.
\end{split}
\end{equation}
The matrix $\hat{A}$ is the Schur complement of $\bar{A}$, which we will denote by $\bar{A}/\bar{A}_{22}$. 
Note, similarly to BT, stability is preserved under the BSPA process \cite{liu1989singular}. 

Define the transfer function of the BSPA system to be
\begin{equation*}
\hat{G}_r(s) = \hat{C}(sI-\hat{A})^{-1}\hat{B}+\hat{D}.
\end{equation*}
Then, identically to BT, if $\theta_{n-k+1}<\theta_{n-k}$ and \newline
$\theta_{n-k+1}=\cdots = \theta_n$,
\begin{equation}\label{eq:err3}
\| G - \hat{G}_r \|_{\infty} \leq 2\theta_{n-k+1}.
\end{equation} 
If $\theta_{n-k+1}<\theta_{n-k}$ and $\theta_{n-k+1}>\cdots > \theta_n$,
\begin{equation}\label{eq:err4}
\| G - \hat{G}_r \|_{\infty} \leq 2\sum_{i=n-k+1}^n \theta_i.
\end{equation} 
These bounds are proven in \cite{liu1989singular}.

In many respects, BT and BSPA are complementary types of approximations.  They are both derived from the same block partition of a balanced realization.  They share the same 
error bounds, illustrated in \eqref{eq:err1}-\eqref{eq:err2} and \eqref{eq:err3}-\eqref{eq:err4}. 
It is well known that BT typically provides better approximations at high frequencies while BSPA works well at low frequencies \cite{liu1989singular}.  In what follows, we will see how these similarities allow us to unify both approximations as limiting approximations of a balanced system.

\section{Manifold Boundary Approximation Method} \label{sec:mbam}

The Manifold Boundary Approximation Method, i.e. MBAM, 
was originally described in \cite{Transtrum2014}, but we present an overview here for completeness.  The idea is that a parameterized set of models can be viewed as a mapping between its parameter space and a prediction space, sometimes called ``data space", which is the space of sampled, measured outputs.  As such, it is natural to interpret a model class, or a continuously parameterized set of models, as a manifold embedded in the space of possible predictions (i.e. embedded in data space).  We refer to this manifold as the \emph{model manifold}, denoted by $\MM(\DD)$. 


Control-oriented models, whether linear or nonlinear, are often written as differential algebraic equations, or DAEs:
 \begin{eqnarray}
    \label{eq:dxdt}
 	\dot{\mathbf{x}} & = & \mathbf{f}(\mathbf{x}, \mathbf{z}, \mathbf{p}, \mathbf{u}, t) \\
    \label{eq:g}
    0 & = &\mathbf{g}(\mathbf{x}, \mathbf{z}, \mathbf{p}, \mathbf{u}, t),
    \end{eqnarray}
where $\mathbf{x}$ is the vector of (differential) state variables, $\mathbf{z}$ are the algebraic variables, $\mathbf{p}$ are parameters, $\mathbf{u}$ are inputs (typically assumed to be known in estimation studies) and t is the (scalar) time variable.  For example, \cite{seanbook} illustrates how DAEs with differentiation index zero can be used to represent interconnections of systems.  Likewise, in general we assume the system measurement vector is of the form: 
\begin{equation}
\label{eq:y}
\mathbf{y} = \mathbf{h}(\mathbf{x}, \mathbf{z}, \mathbf{p}, \mathbf{u}, t).
\end{equation}
Note that we begin our work with state space models because typically first-principles modeling techniques result in large sets of such systems of equations, often with more parameters than one can accurately estimate from available data, thus motivating the need for a reduction method.   
These equations lead to the following definition:

\begin{definition}
\label{def:modelmap}
A {\em model mapping} of the set of systems in \eqref{eq:dxdt}-\eqref{eq:y} sampled at fixed times is defined as the map 
$\MM~:~\DD~\subset~{\cal P} \rightarrow {\cal D}$, where ${\cal D}=\RR^{M\cdot p}$ is the data or prediction space, with $\bar{N}<M$. 
\end{definition}

In this work, we assume that $f$, $g$, and $h$ in \eqref{eq:dxdt}, \eqref{eq:g}, and \eqref{eq:y} are such that $\MM$ is smooth.  Furthermore, note that in Definition \ref{def:modelmap}, we have assumed that $\bar{N}<M$, with no bound on $M$.  It is possible for the data or prediction space to be infinite dimensional.  In many cases, however, we sample the output at specific times, making the data space finite dimensional.  Note that while other methods, such as [48], sample system data to build an empirical controllability Gramian, the sampling here embeds the model manifold in an appropriately large data-space.  Moreover, the topological properties of the manifold are largely invariant to details of the sampling \cite{mark2014}.  
Thus, the model manifold becomes the central object to represent the parameterized model class describing the system.  

We use the notation $\MM(\DD)$ to denote the image of $\DD$, or the set of points in $\RR^{M\cdot p}$ that are mapped from $\DD$; this set of points is a manifold embedded in $\cal D$, called the {\em model manifold}.  Each point on the manifold, then, corresponds to a particular choice of parameter, $p\in \cal P$, which specifies a given system in the model class.  For a given model manifold $\MM(\DD)$, we denote the closure of the manifold as $\MC$. 
\begin{definition}
\label{def:interior}
Given a model mapping $\MM:\mathrm{D} \subset {\cal P} \rightarrow {\cal D}$, a point $d \in \MC$ is {\em interior} if there exists an open neighborhood of $\MC$ centered at $d$.  If $d$ is not interior then it is a {\em boundary point}.  
\end{definition}

The set of boundary points of the manifold closure defines the {\em boundary} of the manifold, denoted by $\MB$.
\begin{definition}
\label{def:MBAM}
Given a model mapping $\MM:\DD \subset {\cal P} \rightarrow {\cal D}$, a model mapping $\MMR$ is a $k^{th}$-order {\em manifold boundary approximation} of $\MM$ if:
\begin{enumerate}
\item $\MMR:\DDR \subset \tilde{{\cal P}}=\RR^{\bar{N}-k} \rightarrow {\cal D}$ and 
\item $\MMR(\DDR) \subset \MB$.
\end{enumerate}
\end{definition}
That is to say, a  $k^{th}$ order model boundary approximation is itself a manifold defined as being on the boundary of the model manifold and having $k$ less parameters 
than the original model manifold.  A model boundary approximation {\em method}, then, is a systematic process for choosing a $k^{th}$-order approximation to the model mapping on the boundary  of the model manifold.

Note that in the case where the output is sampled, one needs to collect more samples than parameters to ensure that the model manifold is embedded in a large enough space.  In general, though, as long as there are enough sample points, the topological features of the resulting model manifold are invariant to the specific times when the samples are taken as long as they are decided randomly--see \cite{mark2014} for details.  Once a model manifold is constructed, however, we then turn our attention to the specific details of choosing a $k^{th}$-order approximation.  

To develop such an MBAM method, we consider the sensitivity of such models to their parameters by taking partial derivatives as follows:
\begin{eqnarray}
	\label{eq:dxdp}
	\frac{d}{dt} \frac{\partial \mathbf{x}}{\partial \mathbf{p}} & = & \frac{\partial \mathbf{f}}{\partial \mathbf{x}} \cdot \frac{\partial \mathbf{x}}{\partial \mathbf{p}} + \frac{\partial \mathbf{f}}{\partial \mathbf{z}} \cdot \frac{\partial \mathbf{z}}{\partial \mathbf{p}} + \frac{\partial \mathbf{f}}{\partial \mathbf{p}} \\
    \label{eq:dgdp}
    0 & = & \frac{\partial \mathbf{g}}{\partial \mathbf{x}} \cdot \frac{\partial \mathbf{x}}{\partial \mathbf{p}} + \frac{\partial \mathbf{g}}{\partial \mathbf{z}} \cdot \frac{\partial \mathbf{z}}{\partial \mathbf{p}} + \frac{\partial \mathbf{g}}{\partial \mathbf{p}} \\
    \label{eq:dydp}
    \frac{\partial \mathbf{y}}{\partial \mathbf{p}} & = & \frac{\partial \mathbf{h}}{\partial \mathbf{x}} \cdot \frac{\partial \mathbf{x}}{\partial \mathbf{p}} + \frac{\partial \mathbf{h}}{\partial \mathbf{z}} \cdot \frac{\partial \mathbf{z}}{\partial \mathbf{p}} + \frac{\partial \mathbf{h}}{\partial \mathbf{p}},
\end{eqnarray}
Note that these equations are linear in terms of sensitivities, but the
matrices involved do vary with time. We see
that \eqref{eq:dxdp}-\eqref{eq:dydp} are derived by differentiating \eqref{eq:dxdt}-\eqref{eq:y} with respect
to the parameters and applying the chain rule to account for the
implicit dependence of the dynamic, algebraic, and measured
variables on the parameters. We also use the second
order sensitivities in the sequel, the equations for which can be derived similarly. The problem of calculating parametric
sensitivities for dynamical systems is well-known, with a long
history \cite{leis1988simultaneous}. However, deriving expressions for the first and second order sensitivities by hand can be tedious and error prone
(particularly for large models). We therefore use automatic symbolic and numerical differentiation \cite{rall1981automatic,bucker2006automatic} to simplify the process.

Previous work suggests that model manifolds for real, physical systems tend to be ``thin,'' in the data space where they are embedded; that is to say, measured variables tend to be considerably less sensitive to some parameter combinations than others \cite{Transtrum2010, Transtrum2011, Transtrum2014}. MBAM exploits this ``sloppy" character of such model classes by approximating this ``thin" manifold by its boundary.  We identify the boundary by numerically computing a geodesic on the model manifold and using
the results from this calculation to identify an approximate
model.  This ``boundary" model will have fewer parameters than the original, and it approximates the original model manifold the same way a long edge of a ribbon approximates the ribbon. 

Geodesics are
the analogs of straight lines generalized to curved surfaces,
and we compute them numerically as the solution to a second order
ordinary differential equation in parameter space (while utilizing
quantities from the data space):
\begin{equation}
 \label{eq:geodesic}
 \frac{d^2 p^i}{d \tau^2} = \sum_{j,k} \Gamma^i_{jk} \frac{d p^j}{d \tau} \frac{d p^k}{d \tau}; \ \ 
 \Gamma^i_{jk} = \sum_{l,m} = \left( \mathbf{I}^{-1} \right)^{il} \frac{\partial y_m}{ \partial p_l} \frac{\partial^2 y_m}{\partial p_j \partial p_k}
\end{equation}
where $\Gamma$
are the Christoffel symbols \cite{Spivak1979,Misner1973, Transtrum2011}, containing
curvature information about the mapping between parameter
space and data space.  These are expressed in terms of the first
and second order parametric sensitivities~\eqref{eq:dxdp}-\eqref{eq:dydp} and
$\mathbf{I}$ is the Fisher Information Matrix (FIM) for the measurement process. The geodesic is
parameterized by 
$\tau$, which is proportional to the arc length of the geodesic
as measured on the model manifold, i.e., in data space. Solving~\eqref{eq:geodesic} gives a parameterized curve
$\mathbf{p}(\tau)$
in parameter space that is
used to reveal a limiting behavior in the model as we demonstrate
below.

Equation~\eqref{eq:geodesic} is an ordinary differential equation that we solve
as an initial value problem. Here, we take the model’s
nominal, or ``true" parameter values as the starting point of the geodesic.
The initial ``velocity'' is taken to be the least sensitive parameter
direction as measured by the FIM. In this way, the MBAM procedure can be summarized as a five step algorithm. Here, we describe the algorithm and give a summary in Fig.~\ref{fig:algorithm}. 

\begin{figure}
     \centering
      \includegraphics[width=0.75\columnwidth]{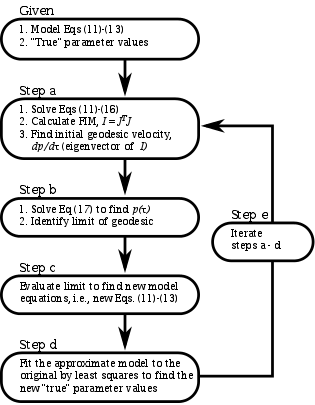}
      \caption{The Manifold Boundary Approximation Method can be summarized as a five step, iterative algorithm.}
\label{fig:algorithm}
\end{figure}

\textbf{Step a}:
The least sensitive parameter combination is
identified from an eigenvalue decomposition of the FIM which becomes  the  initial  geodesic  velocity,
$\partial \mathbf{p} / \partial \tau$
The geodesic acceleration
$\partial^2 p/ \partial \tau^2$
is given by \eqref{eq:geodesic}. If
$\sum_i (\partial p^i/\partial \tau)(\partial^2 p^i/\partial \tau^2) < 0$, then reverse the initial velocity:
$\partial \mathbf{p}/\partial \tau \rightarrow - \partial \mathbf{p}/ \partial \tau$. This heuristic resolves the ambiguous
direction associated with the eigenvalue.

\textbf{Step b}: A geodesic on the model manifold is constructed by
numerically solving~\eqref{eq:geodesic} using the nominal or "true'' parameter values and
the velocity $\partial \mathbf{p} / \partial \tau$
calculated in \textbf{Step a} as initial conditions.
The limiting behavior of this curve identifies the boundary of
the model manifold.

\textbf{Step c}: Having found the edge of the model manifold, the
corresponding lower-dimensional model class is identified as an approximation to the
original model class. By inspecting which values of the parameter
vector become infinite, we identify the boundary as a limiting
approximation in the model class. We evaluate this limit to construct
the approximate model class parameterized by fewer parameters.

\textbf{Step d}: We choose parameter values of this approximate model class by fitting the approximate model to the behavior of
the original model using, for example, least squares regression.

\textbf{Step e}: The procedure is repeated until the reduced model is
unable to approximate quantities of interest within the desired tolerance.

Note that there can be many model boundary approximation methods, depending on which boundary one chooses as an approximation of the model manifold.  A common approach is to choose a metric and then select the boundary that is closest to a given point  in the data space.  In the original presentation of MBAM \cite{Transtrum2014}, an information distance was imposed on the data space defined by the FIM, which defines a Riemannian metric on the parameter space, \cite{Spivak1979,Misner1973}. From this Riemannian metric, computational differential geometry can be used to identify candidate boundary approximations \cite{Transtrum2014}.  Nonetheless, MBAM is a topological operation in general and is agnostic to the actual choice of metric \cite{mark2014}.

\subsection{Michaelis Menten Example} \label{sec:simpExample}

\begin{figure}
    \centering
     \begin{subfigure}[b]{0.5\textwidth}
     \centering
      \begin{overpic}[width=\columnwidth]{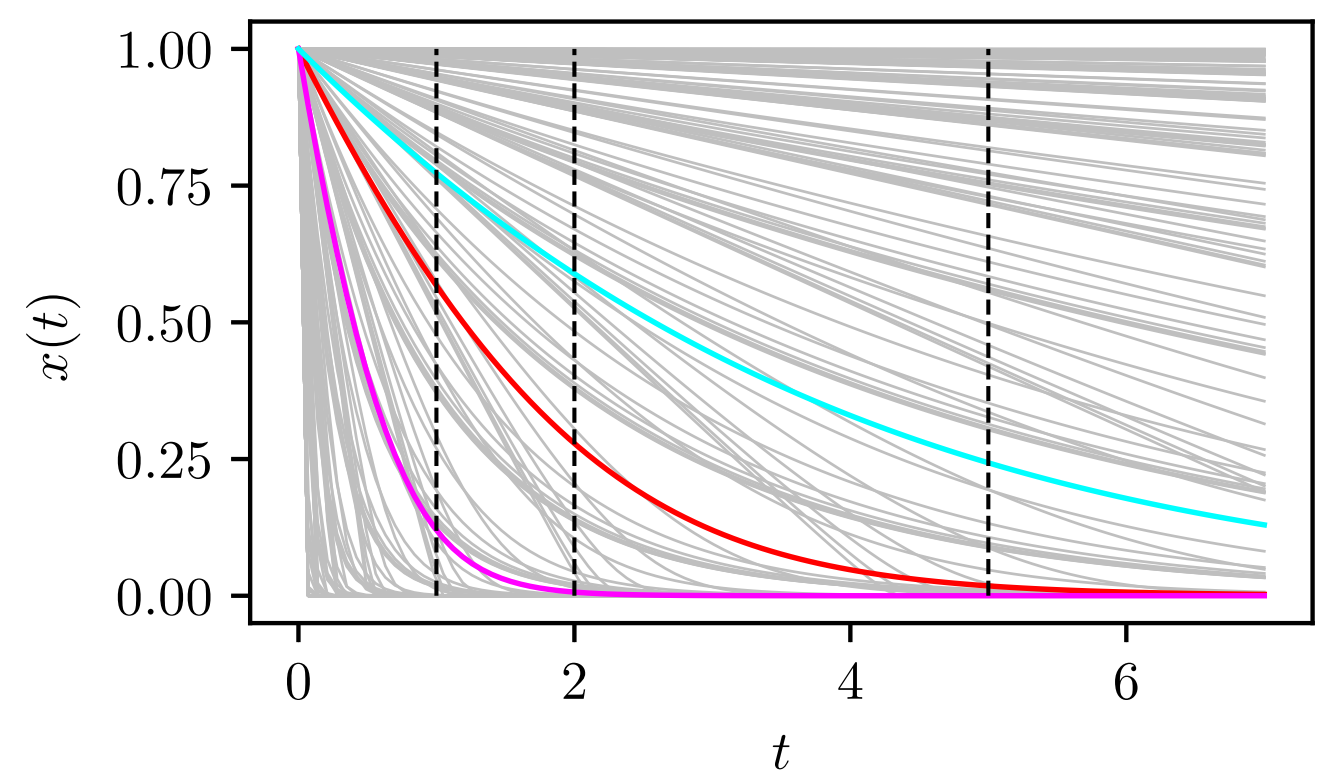}
\end{overpic}
      \caption{Potential time series for the MMR model.  The red, cyan, and magenta curves are calculated by  $(\rho_1,\rho_2)$ values of $(1,1)$, $(3, 1)$, and $(1, 3)$ respectively.}
      \label{fig:AERtimeseries}
    \end{subfigure}
    \begin{subfigure}[b]{0.5\textwidth}
      \centering
      \begin{overpic}[width=\columnwidth]{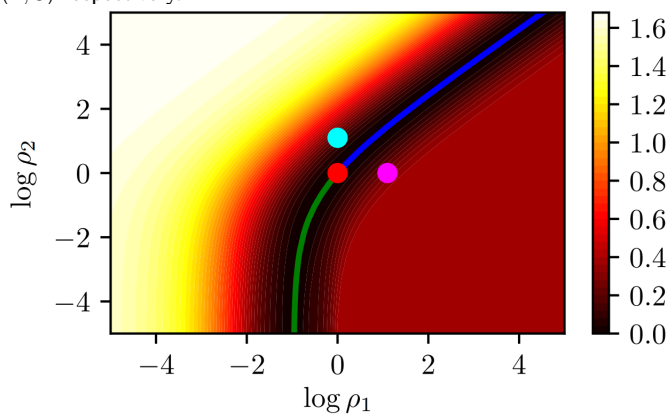}
\end{overpic}
      \caption{Visualizing the model's parameter dependence.  Color corresponds to the sum of squares of the the change in the model's prediction at each time point.  The red, cyan, and magenta points correspond to the curves of the same color in Fig.~\ref{fig:AERtimeseries}.  The green and blue curves are geodesic paths in opposite directions on the model manifold.}
      \label{fig:AERmodelmanifold}
    \end{subfigure}
    \begin{subfigure}[b]{0.5\textwidth}
      \centering
      \begin{overpic}[width=\columnwidth]{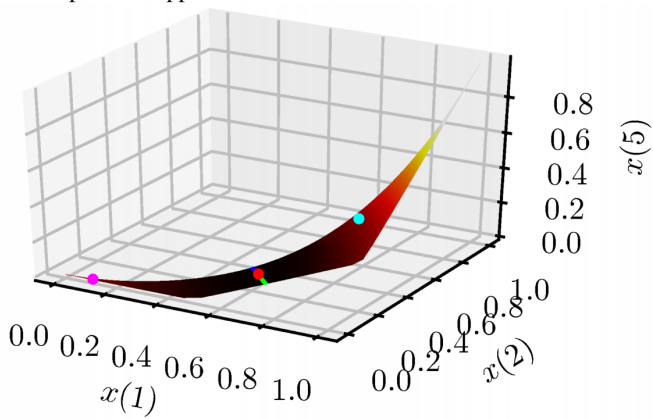}
\end{overpic}
      \caption{A view of the Model Manifold for the MMR model.  Colored points correspond to the point of the same color in Fig.~\ref{fig:AERmodelmanifold} and a trajectory of the same color in Fig.~\ref{fig:AERtimeseries}.  The geodesic curves in Fig.~\ref{fig:AERmodelmanifold} appear as short curves of the same color across the most narrow part of the manifold.}
      \label{fig:AERmodelmanifold2}
    \end{subfigure}
    \caption{MBAM works by sampling outputs at particular times for a variety of parameter values (Fig.~\ref{fig:AERtimeseries}), constructing geodesics in parameter space (Fig.~\ref{fig:AERmodelmanifold}), and interpreting the limiting case of these curves as portions of the manifold boundary in ``data space" (Fig.~\ref{fig:AERmodelmanifold2}).
    }
\label{fig:AER}
\end{figure}

The MBAM concept is best illustrated by example.  Consider the dynamics of 
the Michaelis Menten Reaction (MMR),
\begin{equation}
\label{eq:mmr}
\dot{x}(t) = \frac{-\rho_1 x(t)}{\rho_2 + x(t)}
\end{equation}
 where $\rho_i$ indicates a parameter in the dynamic equations characterizing the family of models under consideration \cite{mmr1,Briggs1925,mmr
}.  The parameters $\rho_i$ are only physically relevant for positive values; we therefore restrict our attention to the domain $\DD$ as the positive quadrant of the parameter space, that is, where $\rho_i\geq 0 \ \forall i$.  
Assuming a fixed initial condition, $x_0=1$, and varying $\rho_1$ and $\rho_2$ among all possible elements of the domain, $\mathrm{D}$, we observe some sample time series for this model as shown in Fig.~\ref{fig:AERtimeseries}.

Consider now three observations of this system at time points, $(t_1, t_2, t_3)$, indicated by the  vertical dotted lines in Fig.~\ref{fig:AERtimeseries}.  To visualize the sensitivity of the model to variations in the parameters, it is common to construct an ``error function" in parameter space as in Fig.~\ref{fig:AERmodelmanifold}.  An alternative approach is to recognize that the space of all possible experimental data at these three time points forms a three-dimensional ``data space.''    All possible model predictions, for different values of $\rho_1$ and $\rho_2$, correspond to a two-dimensional subset of this space.  This two-dimensional surface is the model manifold characterized as in Definition \ref{def:modelmap}. 
The manifold for the MMR model illustrated in Fig.~\ref{fig:AERmodelmanifold2} is bounded, a feature shown to be common among model manifolds with many more parameters than can be estimated from available data \cite{Transtrum2010, Transtrum2011}.  The existence of these boundaries is the crucial element that enables the manifold boundary approximation, where each boundary corresponds to a different model reduction of the original model in \eqref{eq:mmr}. The ``top''  boundary reached by the blue geodesic curve,  
corresponds to the limit where $\rho_1,\rho_2 \rightarrow \infty$, and the ratio $\rho_1/\rho_2$ becomes the new parameter. 
The ``bottom'' boundary reached by the green geodesic curve corresponds to the limit where $\rho_2 \rightarrow 0$, leaving $\rho_1$ as the sole parameter. These two different limiting approximations each lead to two distinct reduced models of the form:
\begin{align}
\dot{x}(t) &= - \frac{ \rho_1}{\rho_2} x(t), \text{ and} \label{eq:mmr2}\\
\dot{x}(t) &= - \rho_1, \label{eq:mmr3}
\end{align}
respectively.
A well--known interpretation of the MMR in biochemistry is that it describes a saturable reaction.  That is, at low concentrations (small values of $x$), the reaction rate is nearly first order (i.e., linear in $x$), while at high concentrations the rate saturates into a zeroth order (i.e., constant with respect to $x$).  Notice how these two limits are naturally identified as the boundaries of the model manifold in Fig.~\ref{fig:AERmodelmanifold2} and together bound the space of predictions that can be reached by the model.

To be clear, the use of MBAM to approximate the MMR system here reduced  the  set of systems from  a two dimensional set to a one dimensional set, but it did not reduce the dynamical order of the system, which started as a first order system and remained, in each limiting case, a first order system.  This is because parameter reduction does not necessarily lead to model-order reduction.  The next example will demonstrate a case  where parameter reduction does reduce the order of the model.  Although the dynamical order was not reduced in the example above, the reduced models are \emph{simpler}.   While the original model exhibited nonlinear dynamics, the approximate models in \eqref{eq:mmr2} and \eqref{eq:mmr3} are  linear and constant, respectively.  Note that code for computing each step of the MBAM process can be found at \cite{github}.

\subsection{Second-Order LTI Example} \label{sec:2ndExample}

\begin{figure}
    \centering
     \begin{subfigure}[h]{0.495\textwidth}
     \centering
     \begin{overpic}[width=\columnwidth]{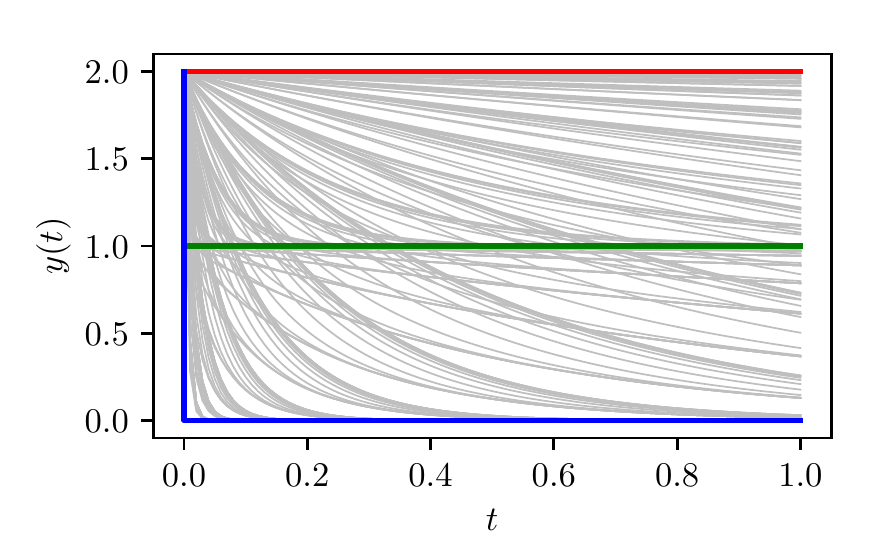}
\end{overpic}
      \caption{Different values of system parameters generate different output trajectories.  Sampling each trajectory at three (arbitrary) times associates a point in ${\mathbb R}^3$ (i.e. ``data space") with each parameter combination.}
      \label{fig:Slide2}
    \end{subfigure}
    \begin{subfigure}[h]{0.495\textwidth}
     \centering
       \begin{overpic}[width=\columnwidth]{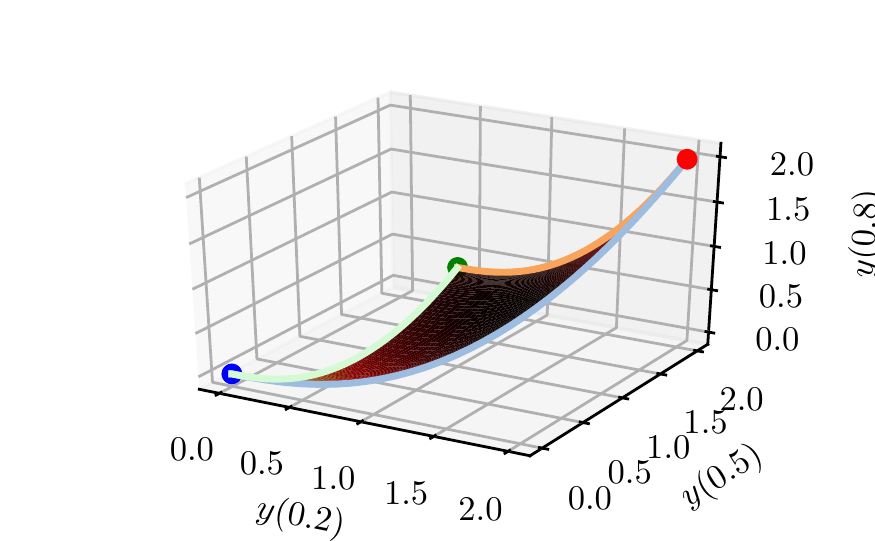}
\end{overpic}
      \caption{Plotting all possible parameter combinations generates the behavior of the set of systems.  Here are two perspectives of this behavior, or the ``model manifold," embedded in data space.  Different points are calculated by varying the parameters $\rho_1$ and $\rho_2$.}
      \label{fig:MM1}
    \end{subfigure}
    \begin{subfigure}[h]{0.495\textwidth}
     \centering
      \begin{overpic}[width=\columnwidth]{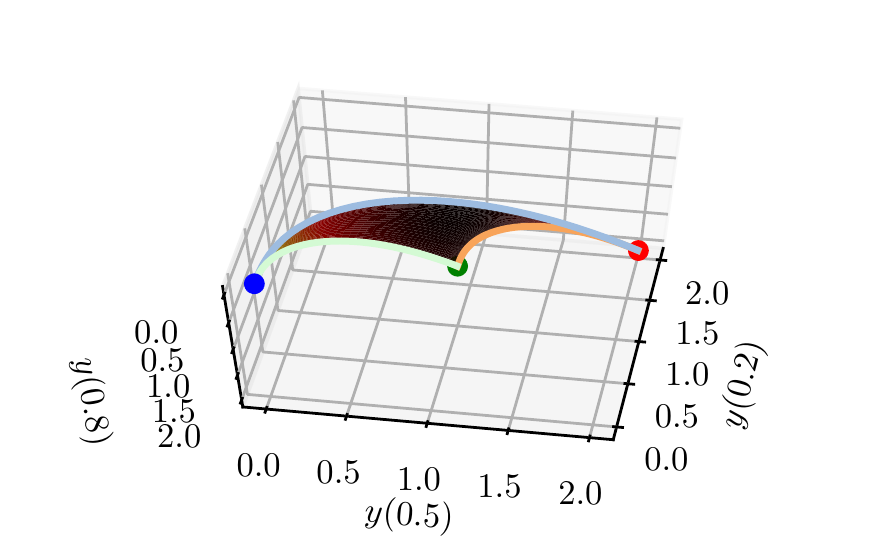}
\end{overpic}
      \caption{Plotting all possible parameter combinations generates the behavior of the set of systems.  Here are two perspectives of this behavior, or the ``model manifold," embedded in data space.  Different points are calculated by varying the parameters $\rho_1$ and $\rho_2$.}
      \label{fig:MM2}
    \end{subfigure}
    \caption{Considering the geometric properties of the model manifold yields important insights for generating low complexity approximations.
    }
\label{fig:MM}
\end{figure}
As a second example, consider the set of systems described by:
\begin{equation*}
\begin{split}
\begin{bmatrix}
\dot{x}_1(t) \\
\dot{x}_2(t)
\end{bmatrix} &= \begin{bmatrix}-\rho_1&0\\0&-\rho_2\end{bmatrix} \begin{bmatrix}x_1(t)\\x_2(t)\end{bmatrix}\\
y &= \begin{bmatrix}1&1\end{bmatrix} \begin{bmatrix}x_1(t)\\x_2(t)\end{bmatrix}
\end{split}
\end{equation*}
where $t, \rho_1, \rho_2\in \mathbb R^+$.  For any choice of $\rho_1$ and $\rho_2$, the behavior of the corresponding system is the set of functions
\[
y(t) = e^{-\rho_1 t}x_1(0)+e^{-\rho_2 t}x_2(0).
\]
When we fix the initial condition, say to $x(0)=\left[\begin{array}{cc}1&1\end{array}\right]^T$, this set is completely parameterized by the parameters $\rho_1$ and $\rho_2$.
One way to characterize the behavior of this set of systems is to associate each choice of parameters with an observation vector given by $v=\left[\begin{array}{ccc}y(t_1)&y(t_2)&y(t_3)\end{array}\right]^T$, where the observation times $t_1$, $t_2$, and $t_3$ are fixed.  Spanning over all choices of parameters $\rho_1$ and $\rho_2$, the observation vectors $v(\rho_1,\rho_2)$ then sweep out a two-dimensional manifold embedded in $\mathbb{R}^3$, shown in Fig. \ref{fig:MM1}-\ref{fig:MM2}. 
This is the {\em model manifold} for this set of systems, and we note that although its size and shape somewhat depend on the observation times $t_1$, $t_2$, and $t_3$, its characteristic topological features (number of cusps and edges and the relationships among them) are determined solely by the relationship between the parameters, $\rho_1$ and $\rho_2$, and observations, $y(t)$.  That is to say, regardless of how complicated the underlying dynamics are, an $\bar{N}$-parameter system generates an $\bar{N}$-dimensional manifold embedded in a larger data space (a fact that  therefore requires the number of observation points to be larger than the number of parameters), and the relationship between these cusps and edges is fixed, regardless of when we make the observations or whether we make more than $n$ observations, etc. \cite{mark2014}.

This invariance of the key topological features of the model manifold allows us to consider a graphical representation among these features, shown in Fig. \ref{fig:2MM2}. 
\begin{figure}
     \centering
      \includegraphics[scale=.4]{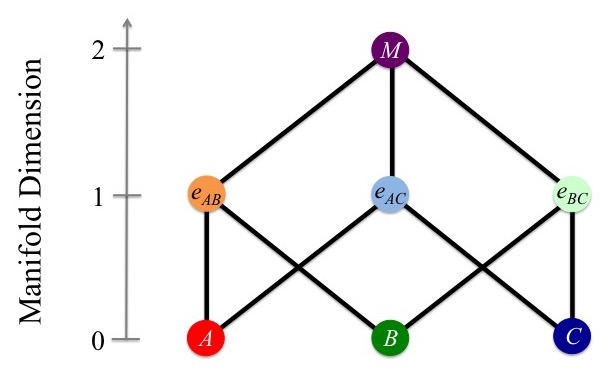}
      \caption{The model manifold $M$ in Fig.~\ref{fig:MM} has topological properties that are invariant to sufficiently rich observation processes.  This topology is captured by this graph, as the two-dimensional manifold has one-dimensional boundaries (edges) $e_{AB}$, $e_{AC}$, and $e_{BC}$, where each one-dimensional boundary is related to two zero-dimensional cusps indicated by the points $A$, $B$, and $C$.}
\label{fig:2MM2}
\end{figure}
This graph encodes the relationship between the model manifold surface in Fig.~\ref{fig:MM} and its boundaries, and between its boundaries and their endpoints.  Each of these objects are themselves a manifold of lower dimension, and this hierarchy of lower dimensional manifolds forms a partial ordering that reveals different ways the fully parameterized model can be approximated by representations of lower complexity.  

Thus, for example, each point on the model manifold, $M$, in Fig.~\ref{fig:MM} is specified by choosing particular values for both of the parameters $\rho_1$ and $\rho_2$; the fact that the model manifold is two-dimensional results from the fact that two parameters must be specified.  When one of these parameters is set to one of its limiting values, zero or infinity, the resulting set of models parameterized by the other parameter form a boundary on the model manifold, a boundary of dimension one (since only one parameter is left free).  Thus one edge is formed when one of the parameters (without loss of generality, assume it is $\rho_1$) is fixed at zero, yielding the remaining behaviors $y(t) = 1+e^{-\rho_2 t}$ represented by the one-dimensional manifold, the edge $e_{AB}$.  Another edge is formed when $\rho_2$ approaches infinity, yielding $y(t) = e^{-\rho_1 t}$ and represented by the edge $e_{BC}$.  Finally, the last boundary of $M$ is formed when $\rho_1$ is fixed to the same value as $\rho_2$, yielding $y(t) = 2e^{-\rho_2 t}$, and is represented by the edge $e_{AC}$.  

The three points where these edges intersect form zero-dimensional manifolds and represent systems where both parameters are fixed at one of the limiting values.  At point $A$, $\rho_1=\rho_2=0$ and the output $y(t)\equiv 2$. At point $B$, one of the parameters ($\rho_1$ in our case above) is zero while the other approaches infinity, with $y(t)\equiv 1$.  At point $C$ both parameters approach infinity and $y(t)\equiv 0$.  

The partial ordering among these models of decreasing complexity is captured by the Hasse diagram, \cite{mark2014}, shown in Fig.~\ref{fig:2MM2}; this diagram captures the relationship between model complexity and different types of approximations.  So, for example, $e_{AB}$, with behavior $y(t) = 1+e^{-\rho_2 t}$, is an approximation that considers that the unmodeled mode of the system is much slower than the other; it models the fast dynamics of the system.  On the other hand, $e_{BC}$, with behavior $y(t) = e^{-\rho_1 t}$, is an approximation that considers that the unmodeled mode of the system is much faster than the other and has already reached equilibrium; it models the slow dynamics of the system.  Finally, $e_{AC}$ describes the situation where both decay rates are comparable; it models the dynamics of a symmetric system.  

Each of these three types of approximations capture a different idealized behavior of every model in the set of systems parameterized by $\rho_1$ and $\rho_2$.
Likewise, each of these approximations are themselves approximated at their endpoints by one of three extreme behaviors $A$, $B$, or $C$.  These behaviors are themselves stationary, describing the regimes of behavior characterizing these systems.  In this example, $A$ represents a ``high" value, $B$ a ``medium" value, and $C$ a ``low" value, but in other contexts they might represent different phenotypes or behavior regimes such as ``healthy" and ``sick," etc.

These examples all illustrate the underlying mechanism driving MBAM techniques--that of looking for approximations on the boundary of the model manifold.  Current research is exploring efficient algorithms for conducting such a search, as well as application of the ideas to large scale systems, \cite{mark2014}, 
but this work reports how both BT and BSPA, and a host of hybrid techniques, become special cases of MBAM methods when applied to LTI systems.
\section{Parameterizations of LTI Systems}\label{sec:param}

The previous examples show 
how MBAM can reduce the number of parameters used to describe a set of models. The reduced parameter descriptions for the first example, given by \eqref{eq:mmr2} and \eqref{eq:mmr3}, 
parameterize a subset of the original model class, but this subset is chosen to be ``representative" of the original class in the sense that it is a set of models on the boundary of the original set.  We can parameterize this set of boundary models with fewer parameters than the original set, hence MBAM is a parameter-reduction technique.

Parameter reduction is not necessarily model order reduction, however. The example in Section \ref{sec:simpExample} 
shows that MBAM can reduce the number of parameters without changing the order of the system, while the example in Section \ref{sec:2ndExample} shows that depending on the parameterization the order can be reduced. The interest of this work is to use MBAM to perform model order reduction on general LTI systems. This requires a careful examination of the parameters of such systems, so that reduced-order models generated by BT or BSPA can be shown to be instances of reduced-order models generated by MBAM. In this section we present and classify the parameters of general LTI systems, and the following sections will leverage this parameterization to prove this relationship between these two model order reduction techniques and provide a framework to interpolate between them.

\subsection{Parameterizations of Transfer Functions}

Consider the LTI system in \eqref{eq:ss}, which has $m$ inputs and $p$ outputs. The input-output behavior of such a system is characterized by
\begin{equation*}
	Y(s) = G(s)U(s)
\end{equation*}
where $U(s)$ and $Y(s)$ are the Laplace transforms of the input and output signals, respectively, and $G(s)$ is given by
\begin{equation}
\label{eq:ss2tf}
	G(s) = C(sI - A)^{-1}B + D.
\end{equation}
In this context, the operator $G$ is the \emph{transfer function} of the LTI system, and shows what the output of the system will be given any input. Thus $G$ is a $p \times m$ matrix, where each entry is a proper polynomial of the Laplace variable $s \in \mathbb{C}$, which we will write as
\begin{equation}
\label{eq:tf_mat}
	G(s) =
    \begin{bmatrix}
    	\frac{\nu_{11}(s)}{\delta(s)} & \cdots & \frac{\nu_{1m}(s)}{\delta(s)} \\
        \vdots & \ddots & \vdots \\
        \frac{\nu_{p1}(s)}{\delta(s)} & \cdots & \frac{\nu_{pm}(s)}{\delta(s)}
    \end{bmatrix}.
\end{equation}
Without loss of generality, we may consider that each entry has a common denominator $\delta(s)$, with degree $N$, and the degree of every polynomial $\nu_{ij}(s)$ is equal to $N$. 
The matrix $G$ can be rewritten as
\begin{equation}
\label{eq:tfparam}
	G(s) = \frac{\nu(s)}{\delta(s)}
\end{equation}
where $\nu(s) = [\nu_{ij}(s)]$. We assume that $\delta(s)$ is monic (the leading coefficient is one), since if it were not, in each entry, both $\delta(s)$ and $\nu_{ij}(s)$ could be divided by the leading coefficient in $\delta(s)$ without changing the behavior of $G(s)$. Thus, each $\nu_{ij}(s)$ is determined by the $N + 1$ coefficients; therefore $\nu(s)$ is completely characterized by $pm(N+1)$ coefficients. Adding in the $N$ coefficients of $\delta(s)$ (not $N+1$ because $\delta(s)$ is monic), it follows that $G(s)$ is completely characterized by $Npm + N + pm$ coefficients (see Fig. \ref{fig:standard_tf}).

\begin{figure}
	\centering
    \begin{subfigure}[h]{0.45\textwidth}
    	\centering
        \includegraphics[scale=0.4]{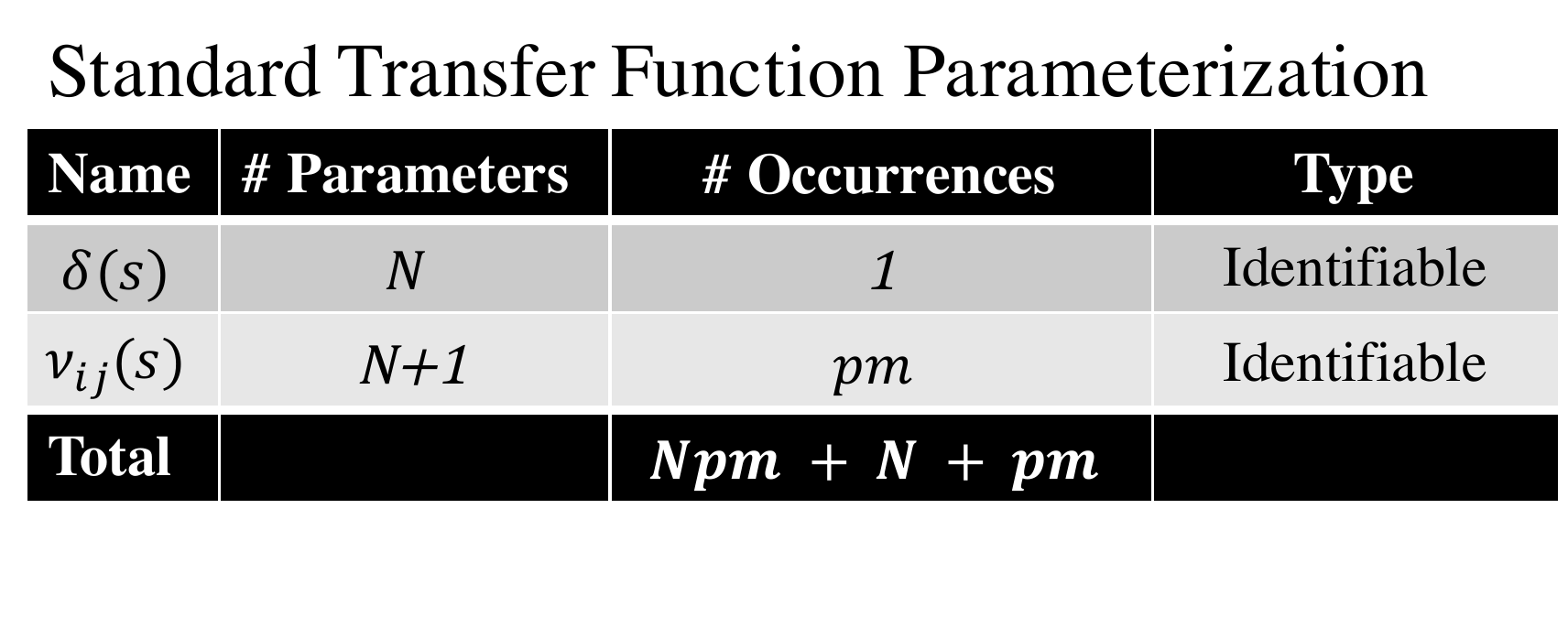}
        \caption{A minimal list of parameters that define the set of all transfer functions of degree $N$ with $m$ inputs and $p$ outputs. This parameterization is shown in \eqref{eq:tf_mat}.}
        \label{fig:standard_tf}
    \end{subfigure}
    \begin{subfigure}[h]{0.45\textwidth}
    	\centering
        \includegraphics[scale=0.4]{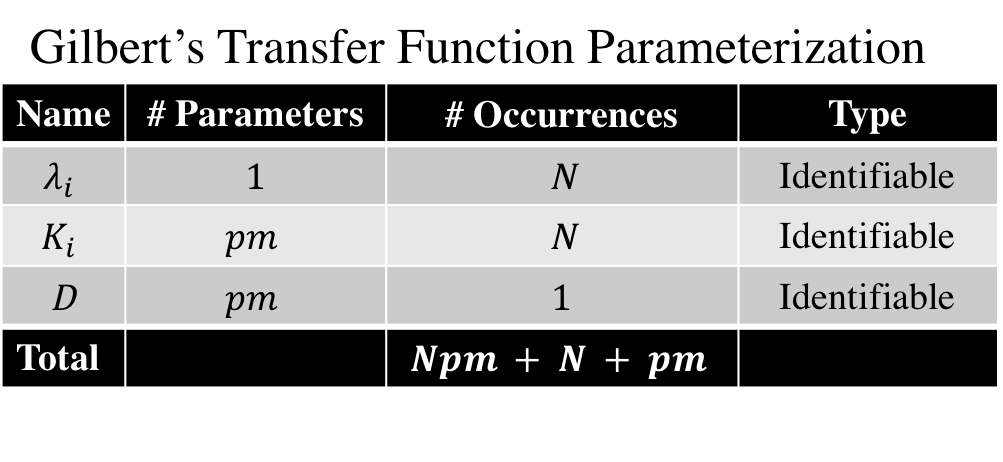}
        \caption{A minimal list of parameters that define the set of all transfer functions of degree $N$ with $m$ inputs and $p$ outputs, with the constraint that the poles of the system have geometric multiplicity one. This parameterization is shown in \eqref{eq:gilbert}.}
        \label{fig:gilbert_tf}
    \end{subfigure}
    \begin{subfigure}[h]{0.45\textwidth}
    	\centering
        \includegraphics[scale=0.4]{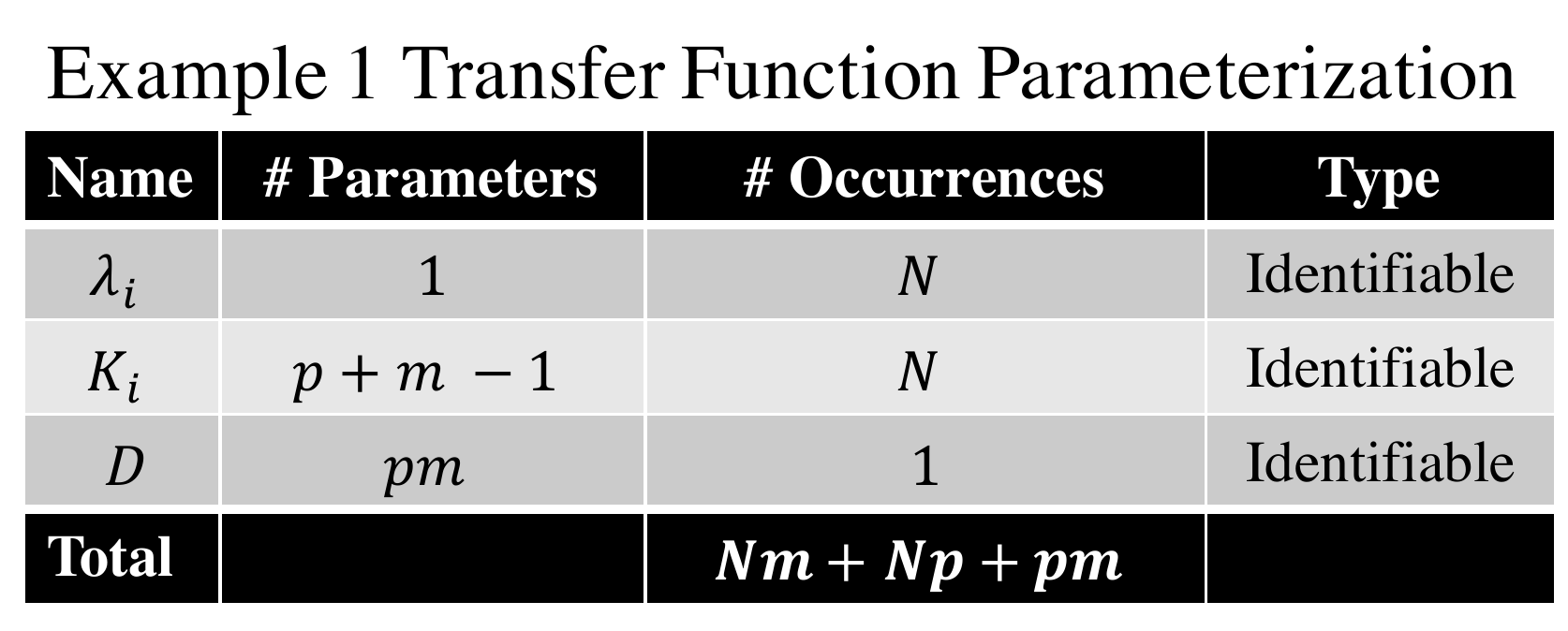}
        \caption{A minimal list of parameters that define the set of all transfer functions of degree $N$ with $m$ inputs and $p$ outputs, with the constraint that the system poles all have algebraic multiplicity one. This scenario is described in Example \ref{ex:r1}.}
        \label{fig:ex1_tf}
    \end{subfigure}
    \caption{Parameter counts for three different sets of transfer functions.}
\end{figure}

These coefficients provide a way to minimally parameterize the set of all $p \times m$ LTI operators of degree $N$. The parameterization is minimal in that if any coefficient is perturbed, then this perturbation will uniquely change the behavior of the LTI system, and no coordinated changes among the other parameters can produce the same change in the input-output behavior. Moreover, all LTI systems of size $p \times m$ with degree $N$ have such a parameterization. In this context we conclude that a $p \times m$ transfer function $G(s)$ with degree $N$ is uniquely characterized by $Npm + N + pm$ parameters.

Another way to minimally parameterize the set of $p \times m$ degree $N$ transfer functions is to employ a partial fraction expansion on each element $\frac{\nu_{ij}}{\delta(s)}$. For systems where every pole has a geometric multiplicity of one, the transfer function then becomes
\begin{equation}
\label{eq:gilbert}
	G(s) = D + \sum_{i=1}^N \frac{K_i}{s - \lambda_i}
\end{equation}
where $\lambda_i \in \mathbb{C}$ are the roots of $\delta(s)$, and $D, K_i \in \mathbb{R}^{p \times m}$. The matrix $D$ is a matrix of the leading coefficients for each $\nu_{ij}$ and is equal to $D$ in \eqref{eq:ss} for any state space realization of $G(s)$; indeed, every state realization of $G(s)$ will have the same $D$ matrix. The $K_i$ matrices contain the scalar numerators corresponding to $s - \lambda_i$ in the partial fraction expansion. In this representation (called Gilbert's realization \cite{gilbert1963controllability}), the parameters are $D$ ($pm$ entries), $\lambda_i$'s ($N$ elements of the spectrum), and $K_i$ ($pm$ entries for each $i=1,...,N$). 
Thus we see that the number of parameters remains the same as the previous parameterization: $Npm + N + pm$ (see Fig. \ref{fig:gilbert_tf}). Nevertheless, this particular parameterization is often more descriptive of the actual system dynamics: not only are the input-output dynamics clearly defined, but as we will see in Section \ref{subsec:parSR}, each $\lambda_i$ is a mode of the system and the $D$ matrix represents how the inputs directly affect the outputs. 

\begin{example}
\label{ex:r1}
	Consider the representation in \eqref{eq:gilbert} where $K_i$ are all rank one matrices; this scenario arises when the poles of the system have algebraic multiplicity of one (i.e., they are distinct). For a particular $i$, instead of $K_i$ being parameterized by $pm$ parameters, here many of the parameters of $K_i$ are dependent on the other parameters, in order to ensure that the rank one constraint is met. Let $c_i \in \mathbb{R}^p$ and $b_i \in \mathbb{R}^m$ for all $i$, where $c_i b_i^T = K_i$. We can assume that the first entry of $c_i$ is one, if not, we could divide $c_i$ by its first entry and multiply $b_i$ by the same, thus leaving $K_i$ unchanged. Under this circumstance, each $K_i$ now only requires $p + m - 1$ entries to be specified in order to completely characterize it. Additionally, now any perturbations in these entries will uniquely change $K_i$. Therefore, these $p + m - 1$ entries are the parameters of $K_i$. Thus for this example, the total number of parameters for $G(s)$ is $Np + Nm + pm$ (see Fig. \ref{fig:ex1_tf}). 
\end{example}

We note that both parametrizations of the set of all $p \times m$ transfer functions of degree $N$, \eqref{eq:tfparam} and \eqref{eq:gilbert}, have the same number of parameters. Generically, almost any element of this set is uniquely identified by a choice of these parameters, and sufficiently rich input-output data enables the identification of the parameters specifying the system generating the data. As a result, in either case, we call these $Npm + N + pm$ parameters {\em identifiable}.  This definition of parameter identifiability is consistent with the definition of structurally identifiable \cite{bellman1970structural}.


\subsection{Parameterizations of State Realizations}\label{subsec:parSR}

In the previous section we saw that the set of all transfer functions of a given size and degree was efficiently parameterized by a particular number of identifiable parameters, and we demonstrated two distinct ways of doing this.  In this section we extend these ideas to state realizations.


Recall that the standard form for state realizations is given in $\eqref{eq:ss}$, and that we have assumed minimality. The set of all state realizations with $m$ inputs and $p$ outputs of order $n$ is parameterized by four matrices: $A \in \mathbb{R}^{n \times n}$, $B \in \mathbb{R}^{n \times m}$,
$C~\in~\mathbb{R}^{p \times n}$, and $D \in \mathbb{R}^{p \times m}$. Counting each entry in each matrix as a parameter yields a total of 
\begin{equation}\label{eq:ssparams}
n^2 + nm + np + pm
\end{equation} 
parameters for the set.

In the generic case, $A$ is diagonalizable with no repeated eigenvalues. This implies that there exists a transformation $T$ such that $x = T\tilde{x}$ and $T^{-1}AT = \tilde{A}$ is diagonal. Likewise, $T^{-1}B = \tilde{B}$, $CT = \tilde{C}$, and $D = \tilde{D}$. In this case, by \eqref{eq:ss2tf},
\begin{equation}
    \label{eq:stspsum}
    \begin{split}
        G(s) &= \tilde{C}(sI-\tilde{A})^{-1}\tilde{B} + \tilde{D} \\
        &= \tilde{D} + [\tilde{c}_1 \cdots \tilde{c}_n]
        \begin{bmatrix}
            \frac{1}{s-\lambda_1} & \cdots & 0 \\
            0 & \ddots & 0 \\
            0 & \cdots & \frac{1}{s-\lambda_n}
        \end{bmatrix}
        \begin{bmatrix}
            \tilde{b}_1^T \\
            \vdots \\
            \tilde{b}_n^T
        \end{bmatrix}\\
        &= \tilde{D} +
        \begin{bmatrix}
            \frac{\tilde{c}_1}{s-\lambda_1} & \cdots & \frac{\tilde{c}_n}{s-\lambda_n}
        \end{bmatrix}
        \begin{bmatrix}
            \tilde{b}_1^T \\
            \vdots \\
            \tilde{b}_n^T
        \end{bmatrix}\\
        &= \tilde{D} + \sum_{i=1}^n \frac{\tilde{c}_i\tilde{b}_i^T}{s-\lambda_i}. \\
    \end{split}
\end{equation}
Each term $\tilde{c}_i\tilde{b}_i^T$ is a rank one matrix (being the outer product of two vectors), thus it matches the transfer function parameterization given in \eqref{eq:gilbert}, satisfying the constraint given in Example \ref{ex:r1} that each $K_i$ is rank one. Example \ref{ex:r1} shows that there are $Nm + Np + pm$ parameters in the set of such transfer functions, where $N$ is the degree of $\delta(s)$. Since $A$ has no repeated eigenvalues, it follows from \eqref{eq:stspsum} that $N=n$. This implies that the set of corresponding transfer functions has $nm + np + pm$ parameters. Therefore, by \eqref{eq:ssparams}, the set of corresponding state realizations has $n^2$ more parameters.


\begin{figure*}
\centering
\begin{overpic}[scale=0.75]{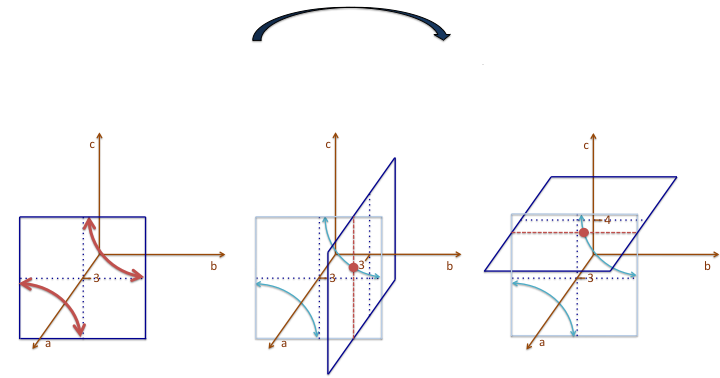}
	\put(40,47.5){{\parbox{\linewidth}{%
     State Realization}}}
     \put(-30,40){{\parbox{\linewidth}{%
     $$G(s) = d + \frac{k}{s - \lambda}$$}}}
     \put(0,41.5){{\parbox{\linewidth}{%
     \begin{align*}
     \dot{x} &= ax + bu \\
     y &= cx + du
     \end{align*}}}}
    \put(0,32){{\parbox{\linewidth}{%
     (a)}}}
     \put(32.5,32){{\parbox{\linewidth}{%
     (b)}}}
     \put(65,32){{\parbox{\linewidth}{%
     (c)}}}
\end{overpic}
\caption{A SISO first order transfer function has three identifiable parameters, while all minimal realizations of any such transfer function have four parameters, some of which are identifiable, conditionally identifiable, or structural.  Consider this example, $G(s)$, where $d=0$, $k=1$, and $\lambda = 3$.  All realizations of this transfer function have $a=3$ and $d=0$, so these two of the four state parameters are {\em identifiable}, leaving the other parameters, $b$ and $c$, to be {\em conditionally identifiable} (i.e. identifiable conditioned on a particular choice of other parameters).  Figure (a) shows the hyperbolic set of possible ($b$, $c$) combinations given $a=3$, and Figures (b) and (c) show how a different realization (a,b,c combination) is selected from the allowed set if, for example, one knew a priori that $b=3$ (Figure (b)) or $c=4$ (Figure (c)). In the case of fixing $b=3$ (or, respectively $c=4$), $b$ would be the {\em structural} parameter (known a priori, specifying a particular basis for the state space realization), enabling the remaining parameter $c$ (or $b$, respectively) to become identifiable \cite{Pare2014Thesis}.} 
\label{fig:philsfig}
\end{figure*}

This discrepancy in parameters is not surprising, since it is well-known that any transfer function will have infinitely many corresponding state realizations. Further, it is also well-known that any two state realizations of a transfer function are related by an $n \times n$ state transformation matrix $T$ as described prior to \eqref{eq:stspsum}. 
Therefore, once a $G$ has been specified with $nm + np + pm$ parameters, $n^2$ additional parameters are required to specify an instance of $(A, B, C, D)$. We say that these $n^2$ parameters are \emph{structural}, since they do not affect the system dynamics and are not identifiable from input-output data, but they do affect the internal structure of the system.

The matrices given in \eqref{eq:ss} are one way to parameterize the set of state realizations. In this parameterization, it is clear from \eqref{eq:gilbert} that $D$ can be identified directly from knowing $G$, so its entries are also identifiable parameters.  That is to say, every state space realization of $G$ has the same $D$ matrix.  However, the rest of the parameters cannot be identified simply by knowing $G$: one must also know $n^2$ additional parameters (fixing the structure, in a sense, by fixing the basis of the state space). For instance, if we fix $G$, there are many admissible choices for $B$ and $C$, but it could be the case that if $B$ were known, $C$ could be determined from data. It also is the case that the reverse is true: fixing $C$ would allow $B$ to be determined from data. Therefore, we refer to the entries of $B$ and $C$ as \emph{conditionally identifiable}: once a certain number of them have been fixed, the rest can be determined from data (see Fig. \ref{fig:standard_ss}). An illustrative example is also shown in Fig.~\ref{fig:philsfig}.


Conditionally identifiable parameters are not ideal when performing MBAM, because it is not clear whether or how the structure or dynamics will be affected. Therefore, it is important to find a state space parameterization which has only identifiable and structural parameters.



\subsection{Parameterization Using the Balanced Realization}
\label{subsec:bal}

When performing MBAM on a state realization, it is important to know which parameters will affect the dynamics of the system (identifiable) and which will affect the implementation of the dynamics (structural), since either may be fixed, depending on the application.  Parameterizations that partition the parameters into identifiable and structural without the need for conditionally identifiable parameters are useful when applying MBAM so that 
unintended parameters are not eliminated. Unique canonical realizations allow one to accomplish this goal. All parameters from a unique canonical realization can be identified from data, since the canonical realization fixes the structure. Then, any realization can be parameterized using the parameters of the canonical realization as the identifiable parameters and the transformation matrix $T$ between the two realizations as the structural parameters.

We illustrate this point by employing the balanced realization. Consider a minimal, stable system as in \eqref{eq:ss}. It is well known that there exists a state transformation from these matrices to an input--output equivalent balanced realization $(\bar{A},\bar{B},\bar{C},\bar{D})$ as in \eqref{eq:ssbal}, where $X$ is the diagonal matrix of HSVs ($\theta_i$'s) satisfying \eqref{eq:lap}. 
Notice the following simple statement is true for balanced realizations.
\begin{lemma} \label{lemma:rs}
Given a balanced realization $(\bar{A},\bar{B},\bar{C},\bar{D})$, $diag(\bar{B}\bar{B}^T) = diag(\bar{C}^T\bar{C})$.
\end{lemma}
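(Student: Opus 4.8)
The plan is to extract the diagonal entries of both Lyapunov equations in \eqref{eq:lap} and exploit the fact that $X = \mathrm{diag}(\theta_1,\dots,\theta_n)$ is diagonal. Since the controllability and observability Gramians coincide with the same diagonal $X$ in a balanced realization, the two equations
\[
\bar{A}^T X + X\bar{A} = -\bar{C}^T\bar{C}, \qquad \bar{A}X + X\bar{A}^T = -\bar{B}\bar{B}^T
\]
are a natural starting point, and comparing them entrywise on the diagonal should immediately yield the claim.

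Concretely, I would first compute the $(i,i)$ entry of the left-hand side of the observability Lyapunov equation. Because $X$ is diagonal, $(\bar{A}^T X)_{ii} = (\bar{A}^T)_{ii}\,\theta_i = \bar{A}_{ii}\theta_i$ and similarly $(X\bar{A})_{ii} = \theta_i\bar{A}_{ii}$, so the $(i,i)$ entry is $2\theta_i \bar{A}_{ii}$, giving $(\bar{C}^T\bar{C})_{ii} = -2\theta_i\bar{A}_{ii}$. Next I would carry out the identical computation on the controllability Lyapunov equation: again using that $X$ is diagonal, $(\bar{A}X)_{ii} = \bar{A}_{ii}\theta_i$ and $(X\bar{A}^T)_{ii} = \theta_i\bar{A}_{ii}$, so that $(\bar{B}\bar{B}^T)_{ii} = -2\theta_i\bar{A}_{ii}$ as well. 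Finally, equating the two expressions gives $(\bar{B}\bar{B}^T)_{ii} = (\bar{C}^T\bar{C})_{ii}$ for every $i = 1,\dots,n$, which is exactly $\mathrm{diag}(\bar{B}\bar{B}^T) = \mathrm{diag}(\bar{C}^T\bar{C})$.

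There is essentially no obstacle here: the only point requiring care is the bookkeeping that picks out the diagonal of a product with a diagonal matrix, namely $(MX)_{ii} = M_{ii}\theta_i$ and $(XM)_{ii} = \theta_i M_{ii}$. Once that observation is in place, the result is a two-line consequence of the defining Lyapunov equations of the balanced realization, and no appeal to minimality, stability, or positivity of the $\theta_i$ is needed beyond what is already assumed.
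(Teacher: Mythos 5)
Your proof is correct and follows exactly the paper's own argument: both take the $(i,i)$ entries of the two Lyapunov equations in \eqref{eq:lap}, use that $X$ is diagonal to get $2\theta_i\bar{A}_{ii}$ on the left-hand side of each, and conclude that the diagonals of $\bar{B}\bar{B}^T$ and $\bar{C}^T\bar{C}$ coincide. No differences worth noting.
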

\begin{proof}
Since $(\bar{A},\bar{B},\bar{C},\bar{D})$ is balanced, the observability and controllability Gramians are equal with the HSVs, $\theta_1,\dots,\theta_n$, on the diagonal. The diagonals of the Lyapunov equations give  
\begin{equation}\label{eq:diaglyp}
\begin{array}{lcl}
a_{ii}\theta_i + \theta_i a_{ii}  = -(\bar{C}^T\bar{C})_{ii}\\
a_{ii}\theta_i + \theta_i a_{ii}  = -(\bar{B}\bar{B}^T)_{ii}
\end{array}
\end{equation}
where the subscript $ii$ indicates the $i$th diagonal entry of the matrix and $a_{ii}$ is the $i$th diagonal entry of the $\bar{A}$ matrix. Therefore $diag(\bar{B}\bar{B}^T) = diag(\bar{C}^T\bar{C})$.
\end{proof}
\noindent Note that this lemma is also a result of Theorem 1 in \cite{smith2003generating}. 

Let the common diagonal entries of $\bar{B}$ and $\bar{C}$ be denoted by  $r_1^2, \dots r_n^2$. 
We can then write the $\bar{B}$ matrix as 
\begin{equation*}
\bar{B} = \begin{bmatrix}
r_1 \beta_1^T \\
\vdots \\
r_n \beta_n^T
\end{bmatrix}
\end{equation*}
where the $\beta_i$'s are a collection of normalized column vectors in $\RR^m$ satisfying $\beta_i^T \beta_i = 1$, for all $i = 1, \dots, n$.  Denoting the $n \times m$ matrix whose rows correspond to $\beta_i^T$ as $\beta^T$ and introducing $R = diag(r_i,\dots,r_n)$, it follows that $\bar{B} = R \beta^T$.  Clearly by construction $diag(\bar{B} \bar{B}^T) = (r_1^2,\dots,r_n^2)$.

Similarly, we can write $\bar{C}$ as
\begin{equation*}
\bar{C} = \begin{bmatrix}
r_1 \gamma_1 & \dots & r_n \gamma_n
\end{bmatrix}
\end{equation*}
where the $\gamma_i$'s are a collection of normalized column vectors in $\RR^p$ satisfying $\gamma_i^T \gamma_i = 1$, for all $i = 1, \dots, n$.  We write $\bar{C} = \gamma R$; note that $diag(\bar{C}^T \bar{C}) = (r_1^2, \dots, r_n^2) = diag(\bar{B} \bar{B}^T)$,  consistent with Lemma \ref{lemma:rs}.

Plugging $r_i^2$ into \eqref{eq:diaglyp} gives 
\begin{equation}\label{eq:aii}
\bar{a}_{ii} = -\frac{r_i^2}{2 \theta_i}
\end{equation} 
for the diagonal elements of the $\bar{A}$ matrix. From the off-diagonals of the Lyapunov equations we find, 
\begin{equation}\label{eq:aij}
\bar{a}_{ij} = r_i r_j \alpha_{ij},
\end{equation}
for $i \neq j$, where
\begin{equation*}
\alpha_{ij} = \frac{\theta_j (\beta^T \beta)_{ij} - \theta_i (\gamma^T \gamma)_{ij}  }{ \theta_i^2 - \theta_j^2}.
\end{equation*}

Leveraging these properties, we see that a balanced realization is specified by $(\theta, \beta, \gamma, R, D)$ as follows:
\begin{equation}
\begin{split}
\dot{\bar{x}}(t) & =  \bar{A}\bar{x}(t) + \begin{bmatrix}
r_1 \beta_1^T \\
\vdots \\
r_n \beta_n^T
\end{bmatrix}u(t) \\
y(t) & =  \begin{bmatrix}
r_1 \gamma_1 & \dots & r_n \gamma_n
\end{bmatrix}\bar{x}(t) + {D}u(t)
\end{split}
\label{eq:balparam}
\end{equation} 
where $\bar{A}$ is defined in \eqref{eq:aii} and \eqref{eq:aij}.  
We know that $\theta$ and $R$ both contain $n$ parameters and that $D$ contains $pm$ parameters. While $\beta$ is an $n \times m$ matrix, since $\beta^T_i\beta_i = 1$, specifying $n-1$ entries in $\beta_i$ fixes the magnitude of the final entry. Thus $\beta$ carries only $n(m-1)$ parameters. Likewise, $\gamma$ carries $n(p-1)$. It follows that any realization can be specified by these parameters along with an additional transformation matrix $T$ that dictates the change of basis from the balanced realization. 

Using this balanced parameterization, we again have $n^2 + nm + np + pm$ parameters (see Fig. \ref{fig:balanced_ss}), but now each parameter is clearly labeled as either identifiable or structural. 
Since any realization can be uniquely described by $T$, its transformation matrix to the balanced realization, we know that there are $n^2$ structural parameters. As discussed, the remaining $nm+np+pm$ parameters match the same number as $G(s)$ in Example $\ref{ex:r1}$, and therefore are all identifiable from data. As a result this parameterization enables one, when using MBAM to reduce the number of parameters, to be deliberate as to how the structure and/or dynamics are affected.

\begin{figure}
	\centering
    \begin{subfigure}[h]{0.45\textwidth}
    	\centering
        \includegraphics[scale=0.4]{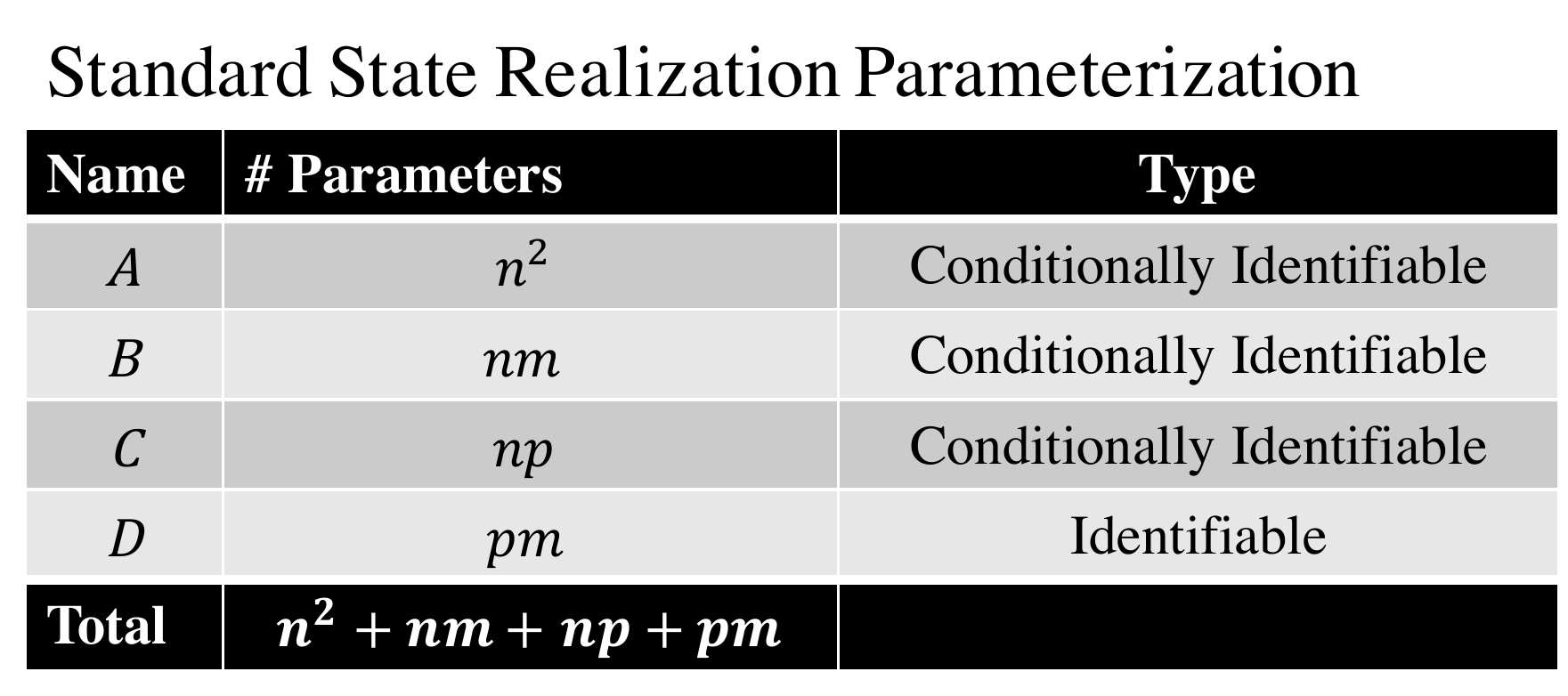}
        \caption{A minimal list of parameters that describe the set of all state space realizations with $m$ inputs and $p$ outputs of order $n$. This parameterization is given in \eqref{eq:ss}.}
        \label{fig:standard_ss}
    \end{subfigure}
    \begin{subfigure}[h]{0.45\textwidth}
    	\centering
        \includegraphics[scale=0.4]{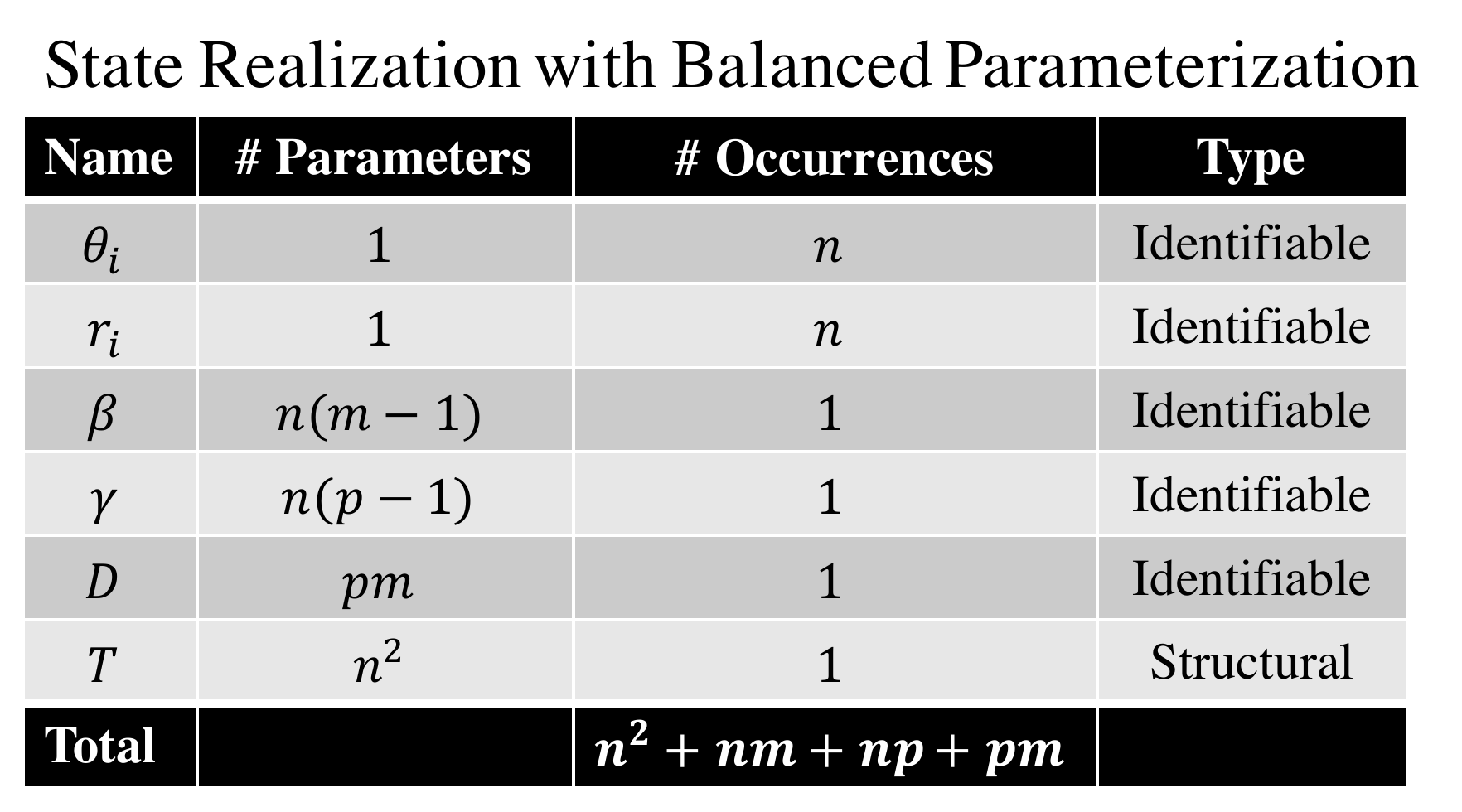}
        \caption{A minimal list of parameters that describe the state space realizations with $m$ inputs and $p$ outputs of order $n$. Note that the number of parameters is the same as in (a), but because any realization is described in terms of its relationship to the balanced realization, all parameters can be partitioned as identifiable or structural.}
        \label{fig:balanced_ss}
    \end{subfigure}
    \caption{Two ways to parameterize the set of state realizations.}
\end{figure}

\section{Unification of Model Reduction Techniques} \label{sec:main}

Given the parameterization of LTI systems of the previous section, we are equipped to apply the algorithm summarized in Figure~\ref{fig:algorithm}.  
In our experience solving the geodesic equation on multiple LTI systems, we empirically observed that the boundary approximations always take one of two forms when reducing the order of the system. 
This empirical observation suggests that it may not be necessary to sample the model predictions and solve Eq.~\ref{eq:geodesic}.
In other words, it is not necessary to explicitly construct the data space to find the mathematical form of the boundary approximation, all of this work can be done directly in parameter space.

In this section, we make this empirical observation rigorous through a sequence of theorems demonstrating that balanced truncation and balanced singular perturbation are each manifold boundary approximations of LTI systems.  
Note that in doing this, we have emphasized the conceptual distinction between a boundary approximation (see Definition~\ref{def:MBAM}) and the algorithm for finding them that requires solving a geodesic (see Figure~\ref{fig:algorithm}).



\subsection{Balanced Truncation from MBAM}

For the first theorem of this section we will restrict ourselves to considering the HSVs as the parameters, holding $R$, $\beta$, $\gamma$ and $D$ fixed.
\begin{theorem}\label{thm:bt}
Consider a balanced realization, as in \eqref{eq:balparam}, where the Hankel singular values ($\theta_i$'s) are parameters and the matrices $R$, $\beta$, $\gamma$ and $D$ are fixed. Balanced Truncation of $k$ states is equivalent to $k$ iterations of MBAM taking the relevant $\theta_i$'s $\rightarrow 0$.
\end{theorem}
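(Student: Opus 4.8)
The plan is to prove the equivalence by directly computing the limit of the realization \eqref{eq:balparam} as the relevant Hankel singular values vanish, and then inducting on $k$. First I would treat a single iteration: without loss of generality suppose the state to be removed is the $n$-th, so that the parameter driven to its limiting value is $\theta_n\to 0^+$ while $R$, $\beta$, $\gamma$, $D$ stay fixed. Reading off \eqref{eq:aii}--\eqref{eq:aij}, the diagonal entry $\bar a_{nn}=-r_n^2/(2\theta_n)$ diverges to $-\infty$ (here $r_n\neq 0$ by minimality), whereas for $j\neq n$ the entries $\bar a_{nj}=r_nr_j\alpha_{nj}$ and $\bar a_{jn}=r_jr_n\alpha_{jn}$ stay bounded, since with the kept singular values $\theta_j$ distinct and strictly positive each of $\alpha_{nj},\alpha_{jn}$ has a finite limit as $\theta_n\to 0$; the row $r_n\beta_n^T$ of $\bar B$ and the column $r_n\gamma_n$ of $\bar C$ are unchanged, as are all entries not involving index $n$. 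Thus in the limit exactly one diagonal entry of $\bar A$ escapes to $-\infty$ and every other entry of $(\bar A,\bar B,\bar C,\bar D)$ converges.

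Next I would push this limit through the resolvent. Partitioning $(\bar A,\bar B,\bar C)$ as in \eqref{eq:partition}, with block $2$ the single state $n$, the block-inversion formula writes the $(1,1)$ block of $(sI-\bar A)^{-1}$ as $\big(sI-\bar A_{11}-\bar A_{12}(s-\bar a_{nn})^{-1}\bar A_{21}\big)^{-1}$ and gives every other block a factor $(s-\bar a_{nn})^{-1}$. Since $\bar a_{nn}\to-\infty$ while $\bar A_{11},\bar A_{12},\bar A_{21}$ stay bounded, the correction term and all off-diagonal and $(2,2)$ blocks vanish, so $(sI-\bar A)^{-1}\to\mathrm{diag}\big((sI-\bar A_{11})^{-1},0\big)$ for every $s$ off the finitely many limiting poles. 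Hence $G(s)\to \bar C_1(sI-\bar A_{11})^{-1}\bar B_1+D$, which is exactly the transfer function $\bar G_r(s)$ of Balanced Truncation of the $n$-th state from Section~\ref{sec:bt}. The same convergence shows the sampled prediction vector in ${\cal D}$ converges, so this limit is a genuine boundary point of $\MM(\DD)$ obtained by losing precisely one parameter --- a first-order manifold boundary approximation in the sense of Definition~\ref{def:MBAM}.

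To iterate, I would note that restricting the Lyapunov equations \eqref{eq:lap} to their $(1,1)$ blocks, and using that $X=\mathrm{diag}(\theta_1,\dots,\theta_n)$ is diagonal, shows $(\bar A_{11},\bar B_1,\bar C_1,D)$ is again balanced, with Gramian $\mathrm{diag}(\theta_1,\dots,\theta_{n-1})$; stability is preserved by truncation. So the reduced model again has the form \eqref{eq:balparam} with one fewer free HSV, and the preceding argument applies verbatim. Choosing the $k$ smallest HSVs --- the least controllable/observable states, which is also the least-sensitive direction singled out by MBAM --- and sending them to zero one iteration at a time, the matrices produced at each step are nested principal submatrices of the original balanced realization, so $k$ iterations compose to simultaneous truncation of those $k$ states, i.e. Balanced Truncation of $k$ states.

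The step I expect to be the main obstacle is making the resolvent limit rigorous: one has to verify carefully that $\bar A_{11},\bar A_{12},\bar A_{21}$ (hence $\bar B_1,\bar C_1$) stay bounded while only $\bar a_{nn}$ blows up --- which rests on the explicit formula for $\alpha_{ij}$ and on the kept HSVs remaining distinct and positive through every iteration --- and then to check that the block-inversion expansion converges uniformly enough away from the limiting poles to conclude convergence of the sampled predictions, which is what links the algebraic limit to the manifold-boundary picture. The remaining steps are routine matrix algebra.
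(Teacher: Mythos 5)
Your argument is correct, but it reaches Balanced Truncation by a genuinely different route than the paper. The paper works entirely in the time domain at the level of the state equations: it multiplies the $\dot{\bar{x}}_n$ equation through by $\theta_n$, lets $\theta_n \rightarrow 0$ to conclude $\bar{x}_n = 0$, and substitutes this back into the remaining state and output equations, which then \emph{are} the truncated realization; iteration is immediate because the surviving equations are literally the BT system. You instead track the limit at the level of the matrix entries and the resolvent: you observe that only $\bar{a}_{nn} = -r_n^2/(2\theta_n)$ diverges while $\bar{A}_{11}$ is untouched and $\bar{A}_{12}$, $\bar{A}_{21}$ converge (via the explicit $\alpha_{ij}$ formula), push the limit through the block inverse of $sI - \bar{A}$, and conclude $G(s) \rightarrow \bar{C}_1(sI-\bar{A}_{11})^{-1}\bar{B}_1 + D$ pointwise. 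Each approach buys something: the paper's substitution is shorter and produces the reduced state-space model directly, whereas your resolvent computation makes explicit the convergence of the input--output map and hence of the sampled prediction vector in data space --- which is what actually certifies that the limit is a point of $\MC$, a step the paper leaves implicit --- and your observation that the retained $(1,1)$ blocks of the Lyapunov equations show the truncated system is again balanced with Gramian $\mathrm{diag}(\theta_1,\dots,\theta_{n-1})$ is a cleaner justification of the iteration than the paper's one-line ``iterating completes the proof.'' The price is heavier machinery (block inversion and a uniform-convergence check away from the limiting poles) for what the paper dispatches with a single substitution $\bar{x}_n = 0$.
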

\proof
Consider the equation for $\dot{x}_n(t)$ from \eqref{eq:balparam}:
\begin{equation}
\label{eq:xn}
\dot{\bar{x}}_n(t) =\sum^{n-1}_{i=1} r_i r_n \alpha_{ni} \bar{x}_i(t) 
-\frac{r_n^2 \bar{x}_n(t)}{2\theta_n} +  r_n \sum_{i=1}^m \beta_{ni} u_i(t) .
\end{equation}
Multiplying through by $\theta_n$ gives
\begin{equation*}
\theta_n \dot{\bar{x}}_n(t) =\theta_n \sum^{n-1}_{i=1} r_i r_n \alpha_{ni} \bar{x}_i(t) 
-\frac{r_n^2 \bar{x}_n(t)}{2} + \theta_n r_n \sum_{i=1}^m \beta_{ni} u_i(t).
\end{equation*}
Performing an MBAM approximation by taking the limit $\theta_n \rightarrow 0$ gives that $\bar{x}_n = 0$. Plugging this back into the dynamics of the rest of the system, i.e. $\dot{\bar{x}}_i(t),\ i<n$, gives
\begin{align}
\dot{\bar{x}}_i(t) &= -\frac{r_i^2 \bar{x}_i(t)}{2} + \sum_{j=1,j\neq i}^{n-1} r_i r_{j} \alpha_{ij} \bar{x}_j(t) +  r_i \sum_{j=1}^m \beta_{ij} u_j(t) , \nonumber 
\end{align}
which is equivalent to BT of one state. Clearly also having $x_n=0$ gives that 
$$y(t) 
= \begin{bmatrix}
r_1 \gamma_1 & \dots & r_{n-1} \gamma_{n-1}
\end{bmatrix} 
\begin{bmatrix}
\bar{x}_1(t) \\
\vdots \\
\bar{x}_{n-1}(t)
\end{bmatrix}.$$
Iterating this MBAM approximation $k-1$ more times, always choosing the smallest HSV, completes the proof.
\endproof

\begin{theorem}\label{thm:bt2}
Consider a balanced realization, as in \eqref{eq:balparam}, where  $r_1, \dots , r_n$ 
are parameters and the Hankel singular values, $\beta$, $\gamma$, and $D$ are fixed. Balanced Truncation of $k$ states is equivalent to $k$ iterations of MBAM taking the relevant  $r_i$'s~$\rightarrow 0$.
\end{theorem}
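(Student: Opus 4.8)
The plan is to mirror the proof of Theorem~\ref{thm:bt}, with $r_n$ now playing the role that $\theta_n$ played there. Order the states (without loss of generality) so that $\theta_1 \ge \dots \ge \theta_n$, so that balanced truncation of one state removes $\bar{x}_n$ and the ``relevant'' parameter is $r_n$. A preliminary observation is that, unlike the $\theta_n \to 0$ case, no rescaling is needed before passing to the limit: in \eqref{eq:xn} every term on the right-hand side carries an explicit factor of $r_n$, and the diagonal entry $\bar{a}_{nn} = -r_n^2/(2\theta_n)$ tends to $0$ rather than diverging, so the limit $r_n \to 0$ can be taken directly in the realization \eqref{eq:balparam}.

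The heart of the argument is to track $(\bar{A},\bar{B},\bar{C},D)$ as $r_n \to 0$. Using \eqref{eq:aii}, \eqref{eq:aij}, and the fact that $\alpha_{ij}$ depends only on $\theta_i,\theta_j,\beta_i,\beta_j,\gamma_i,\gamma_j$ and is independent of $R$, the entire $n$-th row and column of $\bar{A}$ vanish in the limit (here $\bar{a}_{nn}=-r_n^2/(2\theta_n)\to 0$ and $\bar{a}_{in}=\bar{a}_{ni}=r_i r_n\alpha_{in}\to 0$ for $i<n$), the $n$-th row $r_n\beta_n^T$ of $\bar{B}$ vanishes, and the $n$-th column $r_n\gamma_n$ of $\bar{C}$ vanishes, while $D$ is untouched. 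Consequently $\dot{\bar{x}}_n \to 0$, and $\bar{x}_n$ neither drives the retained states (its coupling coefficients $\bar{a}_{in}\to 0$) nor appears in the output (its output coefficient $r_n\gamma_n\to 0$), so it becomes both decoupled and unobservable; the limiting input--output map is therefore exactly that of the $(n-1)$-dimensional subsystem $(\bar{A}_{11},\bar{B}_1,\bar{C}_1,D)$ in \eqref{eq:partition}, which is precisely balanced truncation of $\bar{x}_n$. (This is also why the limit is truncation rather than residualization: $\bar{x}_n$ is frozen at its initial value rather than slaved to a quasi-static expression in $\bar{x}_1$ and $u$, but being unobservable it is harmless.)

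To iterate, one checks that the limiting realization $(\bar{A}_{11},\bar{B}_1,\bar{C}_1,D)$ is again of the balanced form \eqref{eq:balparam}, now of order $n-1$, with parameters $(\theta_1,\dots,\theta_{n-1},r_1,\dots,r_{n-1},\beta_1,\dots,\beta_{n-1},\gamma_1,\dots,\gamma_{n-1},D)$: indeed $\bar{a}_{ii}=-r_i^2/(2\theta_i)$ and $\bar{a}_{ij}=r_i r_j\alpha_{ij}$ for $i,j\le n-1$ involve none of the discarded parameters $\theta_n,r_n,\beta_n,\gamma_n$, and $\bar{B}_1$ and $\bar{C}_1$ have exactly the structure $R\beta^T$ and $\gamma R$ restricted to the first $n-1$ indices. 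Hence the same step applies verbatim; performing it $k-1$ more times, each time sending to zero the $r_i$ attached to the smallest remaining Hankel singular value, yields balanced truncation of $k$ states.

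The step I expect to require the most care is the MBAM bookkeeping rather than the computation: one must argue that $r_n \to 0$ genuinely corresponds to reaching a boundary point of the model manifold in the sense of Definition~\ref{def:MBAM}. This is where the constraint $r_i \ge 0$ enters --- the $r_i^2$ being the common diagonal entries of the controllability and observability Gramians, by Lemma~\ref{lemma:rs} --- so that $r_n=0$ lies on the boundary of the parameter domain $\DD$, the limit of the model mapping exists by the clean vanishing of row and column $n$ established above, and it lands on $\MB$ with strictly fewer parameters. As in the proof of Theorem~\ref{thm:bt}, I would keep this argument at the level of the model mapping rather than chasing uniform-in-$t$ convergence estimates, since the explicit $r_n$-dependence of every matrix entry makes the passage to the limit transparent.
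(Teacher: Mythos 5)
Your proposal is correct and follows essentially the same route as the paper's proof: take $r_n \to 0$ directly in \eqref{eq:balparam}, observe that the $n$-th state decouples from both the remaining dynamics and the output so that the limiting realization is exactly the truncated subsystem, and iterate on the $r_i$ attached to the smallest remaining Hankel singular value. Your additional remarks (no rescaling needed, the truncation-versus-residualization distinction, and the check that the reduced realization is again of balanced form) are elaborations of the same argument rather than a different approach.
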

\proof
Consider the equation for $\dot{\bar{x}}_n(t)$ in \eqref{eq:xn}.
Performing an MBAM approximation by taking the limit $r_n \rightarrow 0$ gives that $\dot{\bar{x}}_n = 0$. 
 Plugging $r_n=0$ into the dynamics of the rest of the system, i.e. $\dot{\bar{x}}_i(t),\ i<n$, gives
\begin{align}
\dot{\bar{x}}_i(t) &= -\frac{r_i^2 \bar{x}_i(t)}{2} + \sum_{j=1,j\neq i}^{n-1} r_i r_{j} \alpha_{ij} \bar{x}_j(t) +  r_i \sum_{j=1}^m \beta_{ij} u_j(t). \nonumber 
\end{align}
Also, when $r_n = 0$,
$$y(t) =  \begin{bmatrix}
r_1 \gamma_1 & \dots & r_{n-1} \gamma_{n-1} & 0
\end{bmatrix} \bar{x}(t). $$
This gives BT of one state. Iterating this limit $k$ times, always choosing the $r_i$ with the largest subscript, completes the proof.
\endproof

Note that Theorems \ref{thm:bt} and \ref{thm:bt2} indicate two different paths along the model manifold that both converge to the same point on the boundary.  These paths are found by taking different limits of different parameters, but the common limit point is the Balanced Truncation approximation for the system.   


\subsection{Singular Perturbation Approximation from MBAM}

In a similar manner BSPA can be derived by applying MBAM to the balanced parameterization. 
\begin{theorem}\label{thm:spa}
Consider a balanced realization, as in \eqref{eq:balparam}, where  $r_1, \dots , r_n$ 
are parameters and the Hankel singular values, $\beta$, $\gamma$, and $D$ are fixed. Balanced Singular Perturbation Approximation of $k$ states is equivalent to $k$ iterations of MBAM taking the relevant  $r_i$'s~$\rightarrow \infty$.
\end{theorem}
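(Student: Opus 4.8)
The plan is to mirror the proofs of Theorems~\ref{thm:bt} and~\ref{thm:bt2}, but now exploiting that sending $r_n \to \infty$ makes the $n$-th mode \emph{infinitely fast} rather than infinitely weak, so the correct limiting operation is a quasi-steady-state reduction rather than a truncation. First I would take the equation for $\dot{\bar{x}}_n(t)$ from \eqref{eq:xn} and divide through by $r_n$, obtaining
\begin{equation*}
\frac{\dot{\bar{x}}_n(t)}{r_n} = \sum_{i=1}^{n-1} r_i \alpha_{ni}\bar{x}_i(t) - \frac{r_n \bar{x}_n(t)}{2\theta_n} + \sum_{i=1}^m \beta_{ni} u_i(t).
\end{equation*}
In the limit $r_n \to \infty$ the left-hand side vanishes (the fast mode equilibrates), and solving the resulting algebraic relation for the fast variable gives $r_n \bar{x}_n(t) \to 2\theta_n\big(\sum_{i<n} r_i \alpha_{ni}\bar{x}_i(t) + \sum_i \beta_{ni} u_i(t)\big)$; in particular $\bar{x}_n(t) \to 0$, but at rate $1/r_n$, so that the product $r_n\bar{x}_n(t)$ stays finite.

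The second step is to substitute this limiting expression back into the equations for $\dot{\bar{x}}_i(t)$, $i<n$, and into the output equation. The key observation is that $\bar{x}_n$ enters the rest of the system only through the products $\bar{a}_{in}\bar{x}_n = r_i r_n \alpha_{in}\bar{x}_n$ and $\bar{c}_n \bar{x}_n = r_n \gamma_n \bar{x}_n$, each of which carries exactly one explicit factor of $r_n$ that cancels the $1/r_n$ decay of $\bar{x}_n$, leaving the finite contributions $2\theta_n r_i \alpha_{in}\big(\sum_{j<n} r_j \alpha_{nj}\bar{x}_j + \sum_j \beta_{nj} u_j\big)$ and $2\theta_n \gamma_n\big(\sum_{j<n} r_j \alpha_{nj}\bar{x}_j + \sum_j \beta_{nj} u_j\big)$ respectively. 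Collecting terms, the reduced $(n-1)$-state system obtained this way is then matched, entry by entry, against the one-state BSPA formulas \eqref{eq:spa} with the partition \eqref{eq:partition} specialized to $\bar{A}_{22} = \bar{a}_{nn} = -r_n^2/(2\theta_n)$ and $\bar{A}_{12}, \bar{A}_{21}, \bar{B}_2, \bar{C}_2$ the corresponding last column/row blocks. Here one checks the (purely mechanical) fact that $\bar{A}_{12}\bar{A}_{22}^{-1}\bar{A}_{21}$, $\bar{A}_{12}\bar{A}_{22}^{-1}\bar{B}_2$, $\bar{C}_2\bar{A}_{22}^{-1}\bar{A}_{21}$ and $\bar{C}_2\bar{A}_{22}^{-1}\bar{B}_2$ are all independent of $r_n$, since the $r_n^2$ in $\bar{A}_{22}^{-1}$ exactly cancels the two flanking factors of $r_n$; this is precisely what makes the boundary model well defined and shows that it coincides with the BSPA model.

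Finally I would iterate: apply the same MBAM step $k$ times, each time sending to infinity the $r_i$ of the currently last state. To conclude that $k$ successive single-state reductions equal the $k$-state BSPA of \eqref{eq:spa}, I would invoke the quotient formula for Schur complements — $\bar{A}/\bar{A}_{22}$ can be built by repeatedly taking Schur complements with respect to one diagonal entry at a time, and the same telescoping holds for $\hat{B}$, $\hat{C}$, $\hat{D}$ — equivalently, one uses the standard fact that BSPA of a balanced realization is again a balanced realization (with Gramian the retained sub-block), so that the parameterization \eqref{eq:balparam} is reproduced at every step and the iteration is legitimate.

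The hard part will be the singular limit itself: unlike Theorems~\ref{thm:bt} and~\ref{thm:bt2}, where the limit commutes with the dynamics trivially, here one must justify the quasi-steady-state substitution, i.e., that as $r_n \to \infty$ the full model's output (equivalently its sampled image in data space) converges, uniformly on compact time intervals, to the output of the reduced model. This is a Tikhonov-type boundary-layer argument, and the cleanest way to discharge it in the MBAM language is to observe that the $r_n \to \infty$ locus is precisely a boundary face of the model manifold carrying one fewer identifiable parameter, so the limiting map is a first-order manifold boundary approximation in the sense of Definition~\ref{def:MBAM}. A secondary, bookkeeping obstacle is carefully tracking the $r_n$-cancellations and confirming that each intermediate reduction stays inside the balanced parameterization, so that the iteration closes.
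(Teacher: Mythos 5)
Your proposal follows essentially the same route as the paper's proof: establish the one-state case by the singular limit $r_n\to\infty$, which forces $\bar{x}_n\to 0$ while $r_n\bar{x}_n$ stays finite and satisfies the algebraic relation that, substituted back, reproduces the Schur-complement formulas of \eqref{eq:spa} (with the $r_n$-cancellations you describe being exactly the paper's observation that $r_n$ and $\bar{x}_n$ only appear in the combination $r_n\bar{x}_n$), and then induct using the quotient formula for Schur complements together with an explicit check that $\hat{B}$, $\hat{C}$, $\hat{D}$ telescope the same way. The paper is no more rigorous than you are about the Tikhonov-type justification of the quasi-steady-state substitution, so there is no gap relative to its own standard.
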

%
\proof
We will prove this by induction, starting with the one state case.
Dividing \eqref{eq:xn} by $r_n^2$ gives
\begin{equation*}
\frac{1}{r_n^2}\dot{\bar{x}}_n(t) =\frac{1}{r_n} \sum^{n-1}_{i=1} r_i \alpha_{ni} \bar{x}_i(t) 
-\frac{ \bar{x}_n(t)}{2\theta_n} + \frac{1}{r_n} \sum_{i=1}^m \beta_{ni} u_i(t).
\end{equation*}
Letting $r_n \rightarrow \infty$ then yields 
\begin{equation*}
0 = -\frac{ \bar{x}_n(t)}{2\theta_n}
\end{equation*}
driving 
\begin{equation}\label{eq:xn0}
{\bar{x}}_n \rightarrow 0.
\end{equation}
Dividing \eqref{eq:xn} by $r_n$ gives
\begin{equation*}
\frac{1}{r_n} \left( \dot{\bar{x}}_n(t) \right) = \sum_{i=1}^{n-1} r_i \alpha_{ni} \bar{x}_i(t) 
-\frac{r_n \bar{x}_n(t)}{2 \theta_n} + \sum_{i=1}^m \beta_{ni} u_i(t).
\end{equation*}
Taking the limit as $r_n \rightarrow \infty$ with $r_n \bar{x}_n$ remaining finite, since, by \eqref{eq:xn0}, $\bar{x}_n \rightarrow 0$, 
gives
\begin{equation}\label{eq:rnxn}
r_n \bar{x}_n(t) = 2 \theta_n \sum_{i=1}^{n-1} r_i \alpha_{ni} \bar{x}_i(t) + 2 \theta_n \sum_{i=1}^m \beta_{ni} u_i(t).
\end{equation}
Furthermore, in the remaining equations for $\dot{\bar{x}}_i(t)$, $i~=~1, \dots, n-1$, and $y_i(t)$, $i = 1,\dots,p$, we find that $r_n$ and $\bar{x}_n$ always appear in the combination $r_n \bar{x}_n$.  Therefore, this limit is a well-defined boundary approximation for this parameterization.
Plugging \eqref{eq:rnxn} into the rest of the system, i.e. $\dot{\bar{x}}_i(t),\ i<n$, gives
\begin{align}
\dot{\bar{x}}_i(t) &= \sum_{j=1,j\neq i}^{n-1} (r_i r_{j} \alpha_{ij} + 2\theta_n r_i r_j \alpha_{in} \alpha_{nj}) \bar{x}_j(t) \nonumber \\
&\ \ \ \ \ \ \ \ +(2\theta_n r_i^2 \alpha_{in} \alpha_{ni} - \frac{r_{i}^2}{2\theta_i}) \bar{x}_{i}(t) \nonumber \\ 
&\ \ \ \ \ \ \ \ + \sum_{j=1}^m (r_i\beta_{ij}+ {2\theta_n}r_i\alpha_{in}\beta_{nj}) u_j(t) 
\label{eq:xi}\\
&= \sum_{j=1}^{n-1}( \bar{a}_{ij} - \frac{\bar{a}_{in}\bar{a}_{nj}}{\bar{a}_{nn}})\bar{x}_j(t) + \sum_{j=1}^m (\bar{b}_{ij} - \frac{\bar{a}_{in}}{\bar{a}_{nn}}\bar{b}_{nj}) u_j(t), \nonumber 
\end{align}
which is the system in \eqref{eq:spa} for $k=1$, that is, BSPA of one state. 
By similarly plugging \eqref{eq:rnxn} into $\bar{C}$ and $\bar{D}$, $\hat{C}$ and $\hat{D}$ from \eqref{eq:spa} for $k=1$ appear.


Now for the inductive step, assume BSPA of $k$ states is equivalent to $k$ MBAM approximations, giving the system in \eqref{eq:spa}, denoted by $(\hat{A},\hat{B},\hat{C},\hat{D})$. 
Let the system matrices that result from BSPA of $k+1$ states be denoted by $(\hat{A}^{k+1},\hat{B}^{k+1},\hat{C}^{k+1},\hat{D}^{k+1})$. 
Partition $\bar{A}$ such that 
\begin{equation*}
\bar{A} = \begin{bmatrix}
* & \bar{A}_{1,k+1} \\
\bar{A}_{k+1,1} & \bar{A}_{k+1,k+1}
\end{bmatrix}
= \begin{bmatrix}
\bar{*} & \bar{A}_{1,k} \\
\bar{A}_{k,1} & \bar{A}_{k,k}
\end{bmatrix}
\end{equation*}
where $\bar{A}_{k+1,k+1} \in \RR^{k+1 \times k+1}$ and $\bar{A}_{k,k} \in \RR^{k \times k}$; therefore, using the Schur complement notation, $\hat{A}^{k+1} = \bar{A}/\bar{A}_{k+1,k+1}$ and $\hat{A} = \bar{A}/\bar{A}_{k,k}$. 
By the Quotient Formula for Schur complements (Theorem 1.4 in \cite{zhang2006schur}), 
\begin{equation} \label{eq:schur}
\hat{A}^{k+1} = \bar{A}/\bar{A}_{k+1,k+1} =(\bar{A}/\bar{A}_{k,k})/(\bar{a}_{k+1,k+1}) 
\end{equation}
where 
\begin{equation}\label{eq:akkblock}
\bar{A}_{k+1,k+1} = 
\begin{bmatrix}
\bar{a}_{k+1,k+1} & \bar{a}^T_{k+1,k} \\ 
\bar{a}_{k,k+1} & \bar{A}_{k,k}
\end{bmatrix}
\end{equation}
with $\bar{A}_{k,k} \in \RR^{k \times k}$ and $\bar{a}_{k+1,k+1} \in \RR$. 
This means that $\hat{A}^{k+1}$ resulting from BSPA of $k+1$ states (the left hand side of \eqref{eq:schur}) is equivalent to the analogous part in \eqref{eq:xi} (the right hand side of \eqref{eq:schur}). 

Now we will show the same holds for $\hat{B}^{k+1}$, which is slightly more laborious since we cannot 
appeal to Schur complement properties. 
Partitioning $\bar{A}_{1,k}$ as
$$ \bar{A}_{1,k} = \begin{bmatrix} \tilde{A}_{1,k} \\ \bar{a}^T_{k+1,k} \end{bmatrix}$$
with $\bar{a}_{k+1,k}$ 
defined in \eqref{eq:akkblock} allows us to partition $\bar{A}_{1,k+1}$ as
\begin{equation*}
\bar{A}_{1,k+1} = \begin{bmatrix} \bar{a}_{k+1} & \tilde{A}_{1,k} \end{bmatrix}
\end{equation*}
where $\bar{a}_{k+1} \in \RR^{n-(k+1)}$. 
Using the above, \eqref{eq:spa}, and 
\eqref{eq:akkblock}, and partitioning $\bar{B}_{k+1}$ as 
%
%
\begin{align*}
\bar{B}_{k+1} &= \begin{bmatrix} \bar{b}_{k+1}^T \\ \bar{B}_{k} \end{bmatrix}
\end{align*}
with $\bar{B}_{k}\in \RR^{n-k \times m}$ and $\bar{b}_{k+1} \in \RR^m$, we have 
\begin{equation}\label{eq:ugly}
\hat{B}^{k+1} = \bar{B}_1 - \begin{bmatrix} \bar{a}_{k+1} & \tilde{A}_{1,k} \end{bmatrix} 
\bar{A}_{k+1,k+1}^{-1} 
\begin{bmatrix} \bar{b}_{k+1}^T \\ \bar{B}_{k} \end{bmatrix}
\end{equation}
where $\bar{B}_{1}\in \RR^{n-(k+1) \times m}$. 
By \eqref{eq:akkblock}, the block matrix inversion formula, matrix multiplication, and some rearranging of terms, 
\begin{multline*}
\begin{bmatrix} \bar{a}_{k+1} & \tilde{A}_{1,k} \end{bmatrix} 
\bar{A}_{k+1,k+1}^{-1} 
\begin{bmatrix} \bar{b}_{k+1}^T \\ \bar{B}_{k} \end{bmatrix} = \\ \bar{A}_{1,k}\bar{A}_{k,k}^{-1}\bar{B}_k + \frac{1}{c}(\bar{a}_{k+1}- \tilde{A}_{1,k}\bar{A}_{k,k}^{-1}\bar{a}_{k,k+1}) d
\end{multline*}
%
\noindent where 
\begin{align*}
c&=\bar{a}_{k+1,k+1}-\bar{a}^T_{k+1,k}\bar{A}_{k,k}^{-1}\bar{a}_{k,k+1} \text{ and}\\
d&=\bar{b}_{k+1}^T - \bar{a}^T_{k+1,k}\bar{A}_{k,k}^{-1}\bar{B}_k.
\end{align*} 
Partition $\hat{A}$ and $\hat{B}$ as 
\begin{equation*}
\hat{A} = \begin{bmatrix}
\hat{*} & \hat{a}_{1,n-k} \\
\hat{a}^T_{n-k,1} & \hat{a}_{n-k,n-k}
\end{bmatrix} \text{ and } 
\hat{B} = \begin{bmatrix} \hat{B}_1 \\ \hat{b}^T_{n-k} \end{bmatrix}
\end{equation*}
with $\hat{a}_{n-k,n-k} \in \RR$ and $\hat{b}_{n-k} \in \RR^{ m}$.
Note that $c$ is equal to $\hat{a}_{n-k,n-k}$, 
 $(\bar{a}_{k+1}-\tilde{A}_{1,k}\bar{A}_{k,k}^{-1}\bar{a}_{k,k+1})=\hat{a}_{1,n-k}$, 
and $d~=~\hat{b}^T_{n-k}$. 
Also note that $\bar{B}_1 - \bar{A}_{1,k}\bar{A}_{k,k}^{-1}\bar{B}_k=\hat{B}_1$. 
Therefore \eqref{eq:ugly} becomes
\begin{equation}\label{eq:pretty}
\hat{B}^{k+1} = \hat{B}_1 - \frac{1}{\hat{a}_{n-k,n-k}} \hat{a}_{1,n-k}\hat{b}^T_{n-k}.
\end{equation}
Similar expressions for  $\hat{C}^{k+1}$ and $\hat{D}^{k+1}$ can be found in an analogous way. Therefore \eqref{eq:schur}, \eqref{eq:pretty}, and the analogous expressions for $\hat{C}^{k+1}$ and $\hat{D}^{k+1}$ can be obtained, similar to \eqref{eq:schur}, by performing an iteration of MBAM on 
$(\hat{A},\hat{B},\hat{C},\hat{D})$ by taking $r_{n-k} \rightarrow~\infty$, thus completing the inductive step. 
Therefore, by induction, it holds for all $k$.
\endproof 
\noindent The resulting BSPA system is Hurwitz and is still a balanced realization with the $n-1$ HSVs, the same as the largest $n-1$ HSVs of the original system \cite{fernando1982singular}.

\textbf{Note:} In \eqref{eq:xi} if $\theta_n$, which is clearly no longer a HSV, is equal to its original value, then the reduced system is BSPA. However if $\theta_n$ is set to zero, then the resulting reduction is BT. Therefore, Theorem \ref{thm:spa} offers a whole new class of reduced systems ranging between BSPA and BT, and given a metric or goal, the optimal reduction can be found.
\section{Illustrative Example}\label{sec:example}

Consider the balanced parameterization for state realizations, presented in Section \ref{subsec:bal}, with $x \in \RR^2$,
\begin{align*}
\dot{\bar{x}}(t) & =  \begin{bmatrix}
-\frac{r_1^2}{2\theta_1} & \frac{r_1 r_2 (\theta_2-\theta_1))}{\theta_1^2-\theta_2^2}\\
\frac{r_2 r_1 (\theta_1-\theta_2))}{\theta_2^2-\theta_1^2} & -\frac{r_2^2}{2\theta_2}
\end{bmatrix}\bar{x}(t) +\begin{bmatrix}
r_1 \\ r_2 
\end{bmatrix}u(t) \\
y(t) & =  \begin{bmatrix}
r_1  & r_2
\end{bmatrix}\bar{x}(t) + du(t)
\end{align*}
where $\theta_1=r_1=1$ and $\theta_2$ and $r_2$ are the free parameters (while enforcing $\theta_2 \in [0,\theta_2]$), and we set $d=0$ and $\beta_1~=~\beta_2 = \gamma_1 = \gamma_2 = 1$. We can calculate the frequency responses of the different systems by varying the free parameters $\theta_2$ and $r_2$. See Fig. \ref{fig:ss2timeseries} for some sample frequency responses. We then use the frequencies $(s_1,s_2,s_3)$ to create the manifold in the data space.
The model manifold, in Figures \ref{fig:ss2modelmanifold}-\ref{fig:ss2modelmanifold3}, has two boundaries. The cyan boundary, at 
the top of the manifold, corresponds to the parameterizations where $r_2 \rightarrow \infty$  (we use one hundred since it is two orders of magnitude larger than one). 
The red boundary 
corresponds to parameterizations where $\theta_1=\theta_2$. The point at which the two boundaries meet at the bottom of the manifold, the magenta dot, is BT, corresponding to the parameterization where $\theta_2$ and/or $r_2 \rightarrow 0$. 
\begin{figure}
    \centering
    \begin{subfigure}[b]{0.5\textwidth}
      \begin{overpic}[width=\columnwidth]{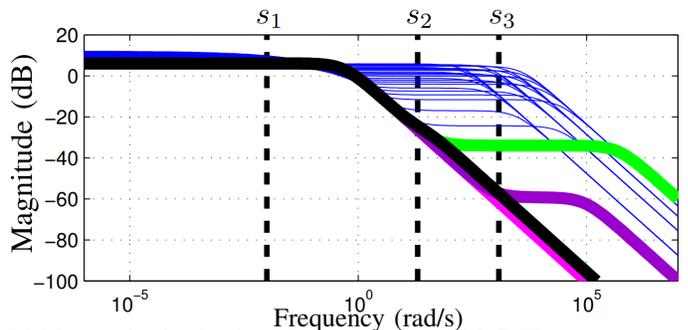}
      \put(39,-2.1){{\parbox{\linewidth}{%
     Frequency (rad/s)}}}
\end{overpic}
      \caption{Magnitude plot for the two--state space model.  Different curves are calculated by varying the parameters $\theta_2$ and $r_2$. The black line is the system with parameters $(\theta_2  , r_2) = ( 0.01, 0.8)$.}
      \label{fig:ss2timeseries}
    \end{subfigure}
    \begin{subfigure}[b]{0.495\textwidth}
    \begin{center}
      \begin{overpic}[width=\columnwidth]{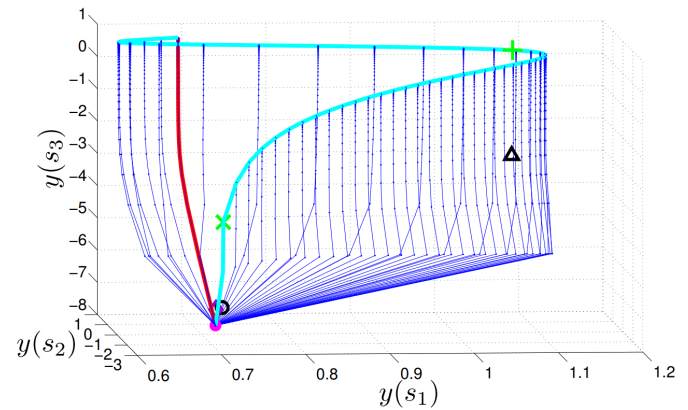}
\end{overpic}
    \end{center}
      \caption{The Model Manifold for the two--state space model:  Each point in the data space corresponds to a curve in Fig.~\ref{fig:ss2timeseries}. The cyan boundary indicates where $r_2 \rightarrow \infty$, the red boundary shows where $\theta_1=\theta_2$, and the magenta dot at the bottom is BT.}
      \label{fig:ss2modelmanifold}
    \end{subfigure}
    \begin{subfigure}[b]{0.495\textwidth}
     \begin{overpic}[width=\columnwidth]{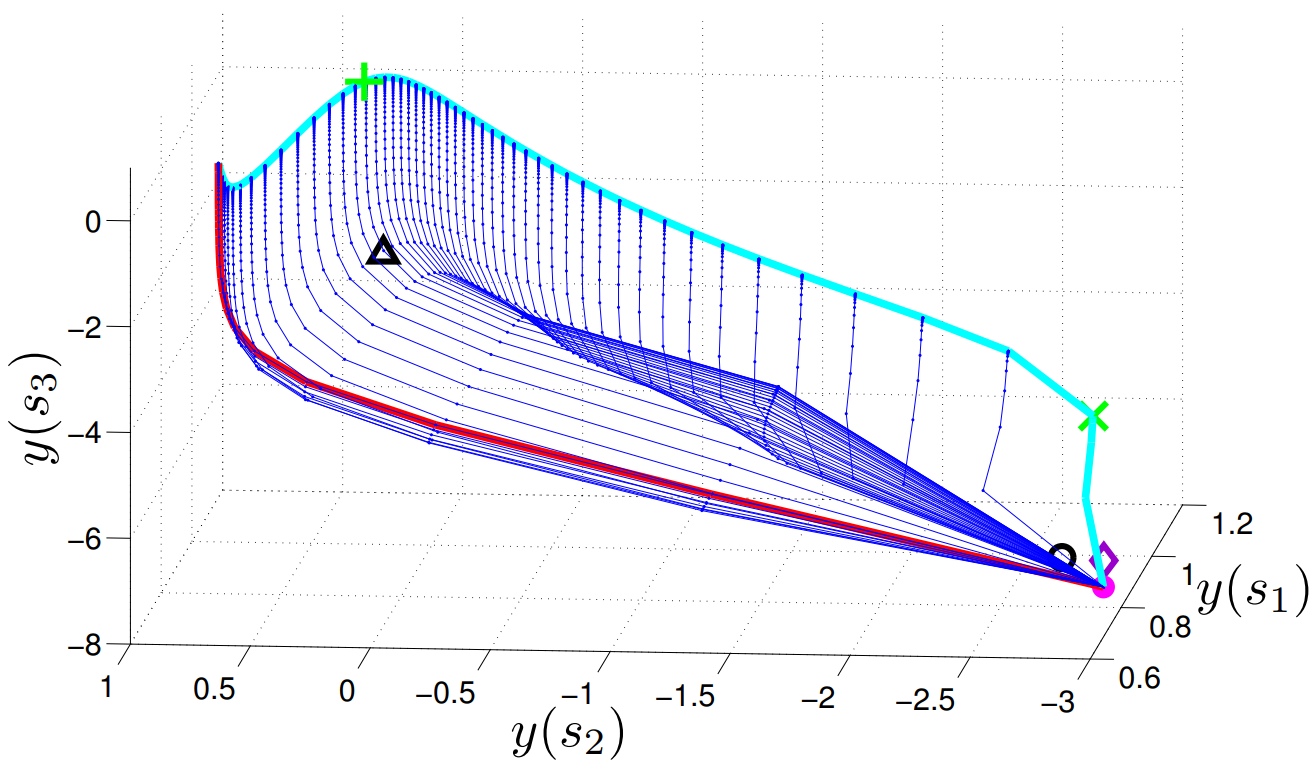}
\end{overpic}
      \caption{Second view of the Model Manifold for the two--state  model.}
      \label{fig:ss2modelmanifold2}
    \end{subfigure}
    \begin{subfigure}[b]{0.495\textwidth}
     \begin{overpic}[width=.7\columnwidth]{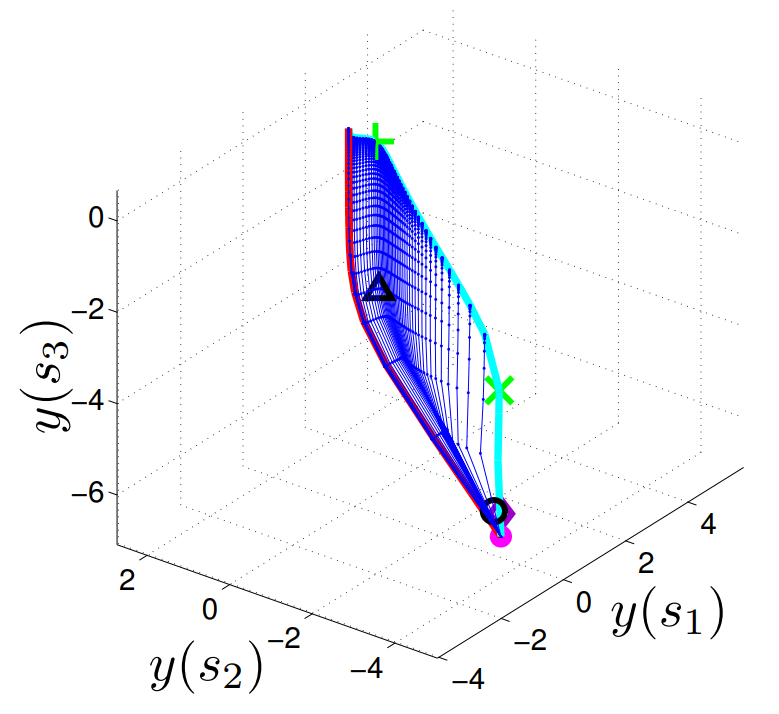}
\end{overpic}
      \caption{Third view with equal ratios on the axes to depict the thinness of the manifold.}
      \label{fig:ss2modelmanifold3}
    \end{subfigure}
    \caption{Magnitude plot and the resulting manifold for the two--state model.
    }
\label{fig:AER2}
\end{figure}


Consider the system given by the parameter values $(\theta_2  , r_2) = ( 0.7, 8)$, depicted on the manifold by the black triangle. It is clear that BT (the magenta dot) is not going to be a good approximation of the system by the distance between the two systems on the manifold. However, BSPA denoted by the green ``plus'' symbol is a very close approximation.   Consider another system given by the parameter values $(\theta_2  , r_2) = ( 0.01, 0.8)$, depicted on the manifold by the black circle. The reduced model given by BSPA, shown as a green ``x'', is still fairly close to the true system; however, it is clear that  BT (the magenta dot) is much closer. 

Although, as was explained in Section \ref{sec:back}, BT and BSPA have the same a priori $H_\infty$ error bound, BT typically gives better results at high frequencies while BSPA excels at low frequencies \cite{liu1989singular}.  The optimal approximation requires one to identify a metric customized to a context of interest. As a concrete example, returning to the system given by the parameter values $(\theta_2  , r_2) = ( 0.01, 0.8)$ (the green circle), the cyan curve connecting the black ``x'' to the magenta dot represents a family of candidate reduced models that interpolates between BT and BSPA.  The optimal reduced model relative to the three frequencies sampled, using the two-norm as the metric as measured in the data-space, is the point on this cyan curve that is closest to the green dot.  It is identified numerically as $(\theta_2  , r_2) = ( 5.3958 \times 10^{-4}, 0.8)$, depicted by the purple diamond.  Clearly this point is much closer to the original system than either BSPA or BT and therefore a better approximation for the chosen  metric of the two-norm.

Recall that the model manifold, in Figures \ref{fig:ss2modelmanifold}-\ref{fig:ss2modelmanifold3}, is constructed from the sampled frequencies $(s_1,s_2,s_3)$. Different samplings will give different manifolds, and therefore could provide better reduced models for different samplings. If a system designer had a certain frequency of importance $s^*$, a manifold could be constructed using sampled frequencies $(\hat{s}_1,\hat{s}_2,\hat{s}_3)$ around $s^*$, and find the best approximation of the system by finding the closest boundary point to the original system.



\section{Conclusion}\label{sec:con}


This paper  demonstrates how both Balanced Truncation and Singular Perturbation Approximations can be  viewed as a type of manifold boundary approximation.  The key idea unifying these different techniques is to choose a canonical parameterization for any given system that partitions its parameters into identifiable and structural sets, eliminating conditionally identifiable parameters.  Application of the Manifold Boundary Approximation Method, an information geometry  technique that requires neither linearity nor time invariance,  then  illustrates that each technique finds approximations on different boundaries of the system's model manifold embedded in an appropriate data space.  Depending on the choice of metric, this manifold can give insight into which approximation is the best for a given application and give alternative approximations interpolating between BT and BSPA.




As we saw in Section \ref{sec:mbam}, nothing about the MBAM approximation requires a linear model class, indeed, the Michaelis-Menten Reaction 
example was nonlinear. This fact, combined with the promising results for linear systems, suggests that by building an analogous parameterization for certain classes of nonlinear systems, similar types of approximations may be obtainable.   This framework, then, focuses attention on obtaining good parameterizations of nonlinear systems in order to recover approximations with desirable  qualities  similar to those of BT and  BSPA, questions that can be explored in future work. 
The work in \cite{scherpen1993balancing,scherpen1996balancing,lall2002subspace} explores extending balanced truncation to nonlinear systems, providing one possible reduced model. Alternatively, after the cost of constructing a parameterization, the MBAM approach, as illustrated in Section \ref{sec:example}, would provide a whole class of possible reduced models, extending BT and BSPA to  nonlinear systems and providing a spectrum of reduced-order systems interpolating between the two.



\bibliographystyle{IEEEtran}
\bibliography{IEEEabrv,bib}


\begin{IEEEbiography} [{\includegraphics[width=1in,height=1.25in,clip,keepaspectratio]{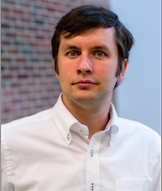}}]
{Philip E. Par\'{e}} received his B.S. in Mathematics with University Honors and his M.S. in Computer Science from Brigham Young University, Provo, UT, in 2012 and 2014, respectively, and his Ph.D. in Electrical and Computer Engineering from the University of Illinois at Urbana-Champaign, Urbana, IL in 2018.  He was the recipient of the 2017-2018 Robert T. Chien Memorial Award for excellence in research and named a 2017-2018 College of Engineering Mavis Future Faculty Fellow. 
His research interests include the modeling and control of dynamic networked systems, model reduction techniques, and time--varying systems.
\end{IEEEbiography}


\begin{IEEEbiography}[{\includegraphics[width=1in, keepaspectratio]{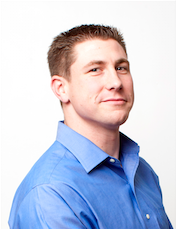}}]
{David Grimsman} is currently a Ph.D. candidate in the ECE Department at the University of California at Santa Barbara, where he is a recipient of the NSF Network Science IGERT Fellowship. He received a B.S. in Electrical Engineering from Brigham Young University in 2006 as a Heritage Scholar, and after some work in industry returned to academia to earn an M.S. in Computer Science from Brigham Young University in 2016. His research interests include distributed control, network systems, game theory, optimization, and algorithm design.
\end{IEEEbiography}


\begin{IEEEbiography}
[{\includegraphics[width=1in,clip,keepaspectratio]{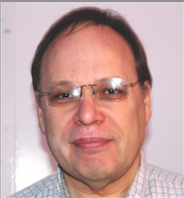}}]
{Alma Wilson} is a postdoctoral fellow in the CS Department at Brigham Young University. He received a B.Sc. in Physics and Mathematics from the University of Auckland, where he was a New Zealand University Junior Scholar and a Ngarimu V.C. Scholar, and he subsequently took a Ph.D. from Brigham Young University in 2000.  After work in industry, with stints in Arizona, Maryland, Australia, and Utah, he returned to academic life.  He is interested in the role of extensive properties in control theory, information theory, quantum theory, and thermodynamics, and he is particularly fond of constructing pedagogically advantageous representations of familiar STEM material.
\end{IEEEbiography}


\begin{IEEEbiography}
[{\includegraphics[width=1in,clip,keepaspectratio]{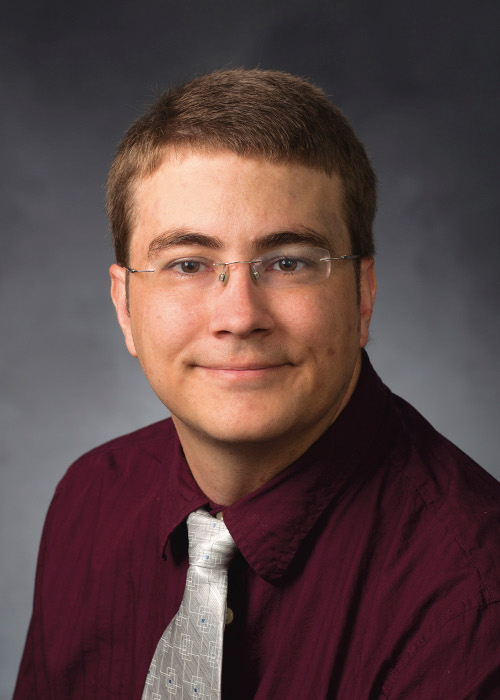}}]
{Mark K. Transtrum} received his M.S. and Ph.D. degrees in physics from Cornell University in 2010 and 2011 respectively.  Previously, he received his B.S. in Physics from Brigham Young University in 2006.  After graduate school, he studied computational biology as a Postdoctoral Fellow at MD Anderson Cancer Center.  Since 2013, he has been on the faculty of the Department of Physics \& Astronomy at Brigham Young University.  His work primarily uses information theory to study properties of mathematical models drawn from a variety of complex systems including power systems, systems biology, materials science, and neuroscience.    
\end{IEEEbiography}


\begin{IEEEbiography} [{\includegraphics[width=1in,clip,keepaspectratio]{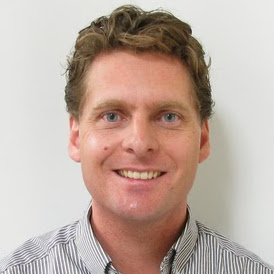}}]
{Sean Warnick} has been with the faculty of the Computer Science Department at Brigham Young University since 2003.  He received his Ph.D. and S.M. degrees from the Massachusetts Institute of Technology in 1995 and 2003, respectively, and his B.S.E. from Arizona State University in 1993. He attended ASU on scholarship from the Flinn Foundation, graduated summa cum laude with honors, and was named the Outstanding Graduate of the College of Engineering and Applied Sciences.  He has also held visiting positions from Cambridge University (2006), the University of Maryland at College Park (2008), and the University of Luxembourg's Centre for Systems Biomedicine (2014).  Sean was named the Distinguished Visiting Professor by the National Security Agency three years in a row, 2008-2010, for his work with their Summer Program for Operations Research Technology, and he has consulted with various companies.  Sean's  research interests  focus on  representations of complex networks of dynamic systems.
\end{IEEEbiography}

\end{document}